\numberwithin{equation}{section}
\newtheorem{theorem}{Theorem}
\newtheorem{thm}[theorem]{Theorem}
\newtheorem{cor}[theorem]{Corollary}
\newtheorem{lemma}[theorem]{Lemma}
\theoremstyle{definition}
\newtheorem{problem}[theorem]{Problem}
\theoremstyle{remark}
\newcommand {\D}{\mathbb D}
\newcommand {\A}{\mathbb A}
\newcommand {\F}{\mathbb F}
\newcommand {\B}{\mathbb B}
\newcommand{\eps}{\varepsilon}
\newcommand{\E}{\mathbb{E}}
\newcommand{\cF}{\mathcal{F}}
\newcommand{\cK}{\mathcal{K}}
\newcommand{\cS}{\mathcal{S}}
\newcommand{\SB}{\text{SB}}
\newcommand{\SD}{\text{SD}}
\newcommand{\Q}{\mathbb{Q}}
\newcommand{\N}{\mathbb{N}}
\newcommand{\R}{\mathbb{R}}
\begin{document}

\title[Duality and  Zippin's theorem.]{Duality on Banach spaces and a Borel parametrized version of Zippin's theorem.}
\subjclass[2010]{Primary: 46B10} 
 \keywords{Banach spaces, duality, descriptive set theory, Zippin's theorem}
\author{B. M. Braga }
\address{Department of Mathematics, Statistics, and Computer Science (M/C 249)\\
University of Illinois at Chicago\\
851 S. Morgan St.\\
Chicago, IL 60607-7045\\
USA}\email{demendoncabraga@gmail.com}
\date{}
\maketitle

\begin{abstract}
 Let $\SB$ be the standard coding for separable Banach spaces as subspaces of $C(\Delta)$. In these notes, we show that if $\B\subset\SB$ is a Borel subset of spaces with separable dual, then the assignment $X\mapsto X^*$ can be realized  by a \emph{Borel} function $\B\to\SB$. Moreover, this assignment can be done in such a way that the functional evaluation is still well defined (Theorem \ref{f}). Also, we prove a Borel parametrized version of Zippin's theorem, i.e., we prove that there exists $Z\in\SB$ and a \emph{Borel} function that assigns for each $X\in\B$ an isomorphic copy of $X$ inside of $Z$ (Theorem \ref{fim}).
\end{abstract}

\section{Introduction.}

 These notes mainly deal with two problems, namely, (i) how to obtain the assignment $X\mapsto X^*$ in a Borel fashion, and (ii) how to obtain a Borel parametrized version of M. Zippin's theorem. More precisely, for the duality problem,  the dual of each $X \in\SD=\{X\in\SB\ |\ X^*\text{ is separable}\}$ has an isometric copy in $\SB$.  In these notes, we show that the assignment $X\mapsto X^*$ can be obtained  
 by a \emph{Borel} function.

Recall that $\SD=\{X\in\SB\ |\ X^*\text{ is separable}\}$ is  complete coanalytic (hence non Borel). Indeed,  there is a Borel map $\Theta:\cK([0,1])\to \SB$ such that $\Theta(K)\cong C(K)$, for all $K\in\cK([0,1])$, where $X\cong Y$ means that $X$ is isomorphic to $Y$ (see \cite{Ke}, Theorem $33.24$). Therefore, as $C(K)$ is an $\ell_1$-predual if $K$ is countable, and $C(K)$ is universal for the class of separable Banach spaces if $K$ is uncountable, this gives us a Borel reduction of $\{K\in\cK([0,1])\mid K\text{ is countable}\}$ to $\SD$. As $\{K\in\cK([0,1])\mid K\text{ is countable}\}$ is complete coanalytic (see \cite{Ke}, Theorem $27.5$), $\SD$ is $\Pi^1_1$-hard, i.e., every coanalytic set Borel reduces to $\SD$. For a proof that $\SD$ is coanalytic and a detailed proof of the arguments above see \cite{Ke}, Theorem $33.24$.

%universal (see \cite{AK}, Theorem $4.4.8$)

As $\SD$ is non Borel,  we have to restrict ourselves to Borel subsets  of $\SD$ in order to define a Borel function. For a Borel $\mathbb{B}\subset \SD$, we show that there exists a Borel map $X\in\mathbb{B}\mapsto X^\bullet\in\SB$ such that, for all $X\in \B$, we have $X^*\equiv X^\bullet$, where $X\equiv Y$ means $X$ is isometric to $Y$. Moreover, we show that there exists a Borel  map

$$(X,x,g)\in\A\mapsto \langle g,x\rangle_X\in\R,$$\hfill

\noindent where $\A=\{(X,x,g)\in \B\times C(\Delta)\times C(\Delta)\ |\ x\in X,g\in X^\bullet\}$, that works as the functional evaluation. Precisely, we prove:

\begin{theorem}\label{f}
Let $\mathbb{B}\subset \SD$ be Borel. There exists a Borel map $\mathbb{B}\to \SB$, $X\mapsto X^\bullet$, such that  $ X^\bullet\equiv X^*$, for all $X\in \mathbb{B}$.  Moreover, let

$$\A=\{(X,x,g)\in \B\times C(\Delta)\times C(\Delta)\ |\  x\in X,g\in X^\bullet\}.$$\hfill

Then there exists a Borel map  $\langle\cdot,\cdot\rangle_{(\cdot)}:\A\to \R$ such that, for each $X\in \mathbb{B}$, 

\begin{enumerate}[(i)]
\item $\langle \cdot,\cdot\rangle_X$ is bilinear and norm continuous, and
\item  $g\in X^\bullet\mapsto \langle g,\cdot\rangle_X\in X^*$ is a surjective linear isometry.
\end{enumerate}
\end{theorem}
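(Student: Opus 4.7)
The plan is to build $X^\bullet$ as an explicit isometric copy of $X^*$ inside $C(\Delta)$, by first representing $B_{X^*}$ concretely as a weak$^*$-compact subset of $[-1,1]^{\N}$ (with coordinates recording the action of a functional on a Borel-chosen dense sequence in $B_X$) and then transferring the resulting Banach space into $C(\Delta)$ via a Borel-chosen continuous surjection from $\Delta$. By Kuratowski--Ryll-Nardzewski applied to the closed-valued Borel multifunction $X \mapsto B_X \subset C(\Delta)$, I choose Borel maps $d_n : \SB \to C(\Delta)$ with $\{d_n(X) : n \in \N\}$ norm-dense in $B_X$. I set
\[
K_X := \Big\{\xi \in [-1,1]^{\N} : \Big|\sum_{i} q_i \xi_i\Big| \le \Big\|\sum_{i} q_i d_i(X)\Big\|_{C(\Delta)} \text{ for all finitely supported } (q_i) \in \Q^{(\N)}\Big\}.
\]
The map $g \mapsto (g(d_n(X)))_n$ is a weak$^*$-homeomorphism $B_{X^*} \to K_X$, and with the $\ell_\infty$-norm inherited from $\R^{\N}$ it is also an isometry onto $B_{X^*}$. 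The assignment $X \mapsto K_X$ is Borel into the Effros--Borel space of closed subsets of $[-1,1]^{\N}$.

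The main obstacle will be producing Borel maps $g_n : \B \to [-1,1]^{\N}$ with $g_n(X) \in K_X$ such that $(g_n(X))_n$ is \emph{norm-dense} (that is, $\ell_\infty$-dense, equivalently dense in the dual norm of $X^*$) in $K_X$, not merely weak$^*$-dense. A direct application of Kuratowski--Ryll-Nardzewski inside $[-1,1]^{\N}$ only yields a weak$^*$-dense selector, which is strictly weaker. The hypothesis $\B \subset \SD$ guarantees such a sequence exists for each $X$; the work is uniform Borel selection. I plan to iterate a Jankov--von Neumann-type uniformization: at stage $k$, given $g_1(X),\ldots,g_{k-1}(X)$, pick a Borel selector $g_k(X) \in K_X$ within $\ell_\infty$-distance $\varepsilon_k \downarrow 0$ of a target built from the previous terms so as to achieve, in the limit, a norm-dense enumeration. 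This requires verifying Borelness of ``there exists $\xi \in K_X$ within $\ell_\infty$-distance $\varepsilon$ of a given finite family'', which must be extracted from the explicit description of $K_X$ combined with the separability of $X^*$.

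With norm-dense Borel $(g_n(X))_n$ in $K_X$ in hand, I define
\[
L_X := \Big\{ b \in [-1,1]^{\N} : \Big|\sum_{n} a_n b_n\Big| \le \sup_m \Big|\sum_{n} a_n (g_n(X))_m\Big| \text{ for all finitely supported } (a_n) \in \Q^{(\N)}\Big\},
\]
which realizes $(B_{X^{**}}, w^*)$ via $\eta \mapsto (\eta(g_n(X)))_n$ (injectivity uses norm-density). Then $X^* \hookrightarrow C(L_X)$ isometrically via $g \mapsto \hat g$, with $\hat g_n(b) := b_n$ on the dense set $\{g_n(X) : n \in \N\} \subset X^*$ and extended by norm-continuity; $\| \hat g \|_{C(L_X)} = \| g \|_{X^*}$ by Hahn--Banach. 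Using the standard fact that every compact subset of $[-1,1]^{\N}$ admits a continuous surjection from $\Delta$ that can be chosen Borel-ly from a tree-code of the set, I select $\pi_X : \Delta \twoheadrightarrow L_X$ depending Borel-ly on $X$ and define
\[
X^\bullet \,:=\, \overline{\mathrm{span}}\{\hat g_n \circ \pi_X : n \in \N\} \subset C(\Delta).
\]
Then $X^\bullet \equiv X^*$, and $X \mapsto X^\bullet$ is Borel as a Borel-parametrized closed linear span. For the functional evaluation, given $(X,x,g) \in \A$, let $\tilde g \in X^*$ be the unique functional corresponding to $g$ under the isometry $X^* \to X^\bullet$. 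Then $\tilde g(d_j(X)) = g(\alpha_j(X))$ where $\alpha_j(X) \in \pi_X^{-1}(((g_n(X))_j)_n)$ is chosen Borel-ly, and for general $x \in X$ one Borel-selects rational coefficients $(c_{k,j})$ with $\sum_j c_{k,j} d_j(X) \to x$ in norm and sets $\langle g, x \rangle_X := \lim_k \sum_j c_{k,j} g(\alpha_j(X))$. Bilinearity, continuity, and the fact that $g \mapsto \langle g, \cdot \rangle_X$ is a surjective linear isometry $X^\bullet \to X^*$ all follow from the isometric construction.
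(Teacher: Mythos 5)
Your overall architecture coincides with the paper's: code $B_{X^*}$ coordinatewise on a Kuratowski--Ryll-Nardzewski dense sequence in $B_X$ (your $K_X$ is, up to normalization, exactly Dodos' coding $K_{X^*}$), select a norm-dense Borel sequence $(g_n(X))_n$ in it, use that sequence to code $B_{X^{**}}$ as a compact subset of $[-1,1]^{\N}$, choose Borel-in-$X$ continuous surjections $\Delta\to L_X$, and realize $X^*$ inside $C(\Delta)$ by composing with that surjection. The definition of $X^\bullet$, its Borelness, and the evaluation map $\langle\cdot,\cdot\rangle_{(\cdot)}$ are all handled essentially as in the paper and are correct modulo routine verifications.

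The gap is exactly at the step you yourself flag as the main obstacle: producing \emph{Borel} maps $g_n:\B\to[-1,1]^{\N}$ with $(g_n(X))_n$ norm-dense (not merely weak$^*$-dense) in $K_X$. This is the main technical result of Dodos' \emph{Definability under Duality} (quoted as Lemma \ref{dodoss} in the paper), and your proposed substitute does not deliver it. First, Jankov--von Neumann uniformization only yields selectors measurable with respect to the $\sigma$-algebra generated by the analytic sets, not Borel ones, so an ``iterated Jankov--von Neumann'' scheme does not by itself produce Borel $g_n$. Second, and more fundamentally, your recursion has no well-defined targets: to guarantee norm-density in the limit you would need, for every $\xi\in K_X$ and every $\varepsilon>0$, some stage at which you aim within $\varepsilon$ of $\xi$; but a countable list of such targets varying Borel-ly in $X$ is precisely the norm-dense selection you are trying to construct, and the weak$^*$-dense sequence that Kuratowski--Ryll-Nardzewski provides cannot serve, since norm-balls around weak$^*$-dense points need not cover $K_X$. (The stage-wise selections themselves are not the problem: for fixed $\eta$ and $\varepsilon$ the set $\{\xi\in K_X:\ \|\xi-\eta\|_\infty\leqslant\varepsilon\}$ is weak$^*$-compact, so Theorem \ref{ooiiou} together with Kuratowski--Ryll-Nardzewski handles each individual step.) The norm-dense Borel selection is a substantive theorem whose proof genuinely uses the norm-separability of the sections and is the technical heart of Dodos' paper; you should either reproduce that argument or cite it, as the paper does. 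Granting that lemma, the remainder of your proof goes through.
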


This result is related and can be seen as an extension of the following theorem due P. Dodos (see \cite{D2}). 

\begin{thm}\label{princesa}\textbf{(Dodos, 2010)}
Say $\SD=\{X\in\SB\ |\ X^*\text{ is separable}\}$, and let $\A\subset \SD$ be analytic. Let $\A^*=\{X\in \SB\ |\ \exists Y\in \A, Y^*\cong X\}$. Then $\A^*$ is analytic.
\end{thm}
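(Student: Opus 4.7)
My plan is to realize $X^*$ isometrically inside $C(\Delta)$ by constructing, in a Borel way, a continuous surjection $\phi_X\colon\Delta\to(B_{X^{**}},w^*)$, and then defining
\[
X^\bullet:=\{t\mapsto\phi_X(t)(g)\;:\;g\in X^*\}\subseteq C(\Delta),\qquad\langle t\mapsto\phi_X(t)(g),x\rangle_X:=g(x).
\]
Isometry of $g\mapsto\tilde g$ is forced by surjectivity of $\phi_X$ together with $\|g\|_{X^*}=\sup_{x^{**}\in B_{X^{**}}}|x^{**}(g)|$, so Borelness of all the objects reduces to Borelness of $(X,t)\mapsto\phi_X(t)$ and of the intermediate codings.

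I would begin with the standard Kuratowski-Ryll-Nardzewski selection giving Borel maps $d_n\colon\B\to C(\Delta)$ with $(d_n(X))_n$ norm-dense in $B_X$. Identifying each $g\in B_{X^*}$ with $(g(d_n(X)))_n\in[-1,1]^{\N}$ produces a compact $K_X$, and the Hahn-Banach criterion
\[
k\in K_X\iff\Big|\sum_{i=1}^m q_ik_{n_i}\Big|\le\Big\|\sum_{i=1}^m q_id_{n_i}(X)\Big\|_{C(\Delta)}\ \text{ for every finite }(q_i)\in\Q^{\N}
\]
shows $\{(X,k)\in\B\times[-1,1]^{\N}:k\in K_X\}$ is Borel, with $g\mapsto(g(d_n(X)))_n$ a weak-$*$-to-product-topology homeomorphism $B_{X^*}\to K_X$.

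The main obstacle is producing a Borel selection $(h_m(X))_m\subseteq B_{X^*}$ whose linear span is \emph{norm}-dense in $X^*$. A direct Kuratowski-Ryll-Nardzewski selection inside $K_X$ yields only weak-$*$ density, which separates the points of $X$ but in general not of $X^{**}$, while norm-totality is exactly what turns $x^{**}\mapsto(x^{**}(h_m(X)))_m$ into a homeomorphism of $(B_{X^{**}},w^*)$ onto a compact $L_X\subseteq[-1,1]^{\N}$. My strategy is to exploit that $X\in\SD$ implies $X$ is Asplund, so $(B_{X^*},w^*)$ is norm-fragmentable, combined with a $\Pi^1_1$-rank reduction: the Szlenk index is a $\Pi^1_1$-rank on $\SD$ and is therefore bounded by some countable ordinal on the Borel set $\B$. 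This should allow an inductive construction of $(h_m(X))$ using weak-$*$ slices of vanishing norm diameter chosen from a fixed countable basis.

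With $(h_m(X))$ in hand, an analogous Hahn-Banach characterization shows that $L_X$ is Borel in $X$, and a standard tree construction driven by a Kuratowski-Ryll-Nardzewski dense selection in $L_X$ produces a Borel family of continuous surjections $\psi_X\colon\Delta\to L_X$; composing with the inverse of the parametrization $j_X\colon B_{X^{**}}\to L_X$ yields $\phi_X$. The pairing on $\A$ is then recovered by Borel-selecting, for $x\in X\setminus\{0\}$, some $t_x\in\phi_X^{-1}(x/\|x\|)$ and setting $\langle\tilde g,x\rangle_X:=\|x\|\,\tilde g(t_x)$, which equals $g(x)$ by the definition of $\tilde g$ and is Borel, bilinear, and norm-continuous as required.
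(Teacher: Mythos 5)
Your outline is, in its architecture, the paper's own route to Theorem \ref{f} (from which Theorem \ref{princesa} is then deduced): code $B_{X^*}$ inside $[-1,1]^{\N}$ via the Kuratowski--Ryll-Nardzewski selectors and a Hahn--Banach criterion, upgrade to a \emph{norm}-dense Borel selection so that $(B_{X^{**}},w^*)$ can be coded faithfully, choose continuous surjections from $\Delta$ onto these codings in a Borel way, and realize $X^*$ inside $C(\Delta)$ as in the classical embedding of separable spaces into $C(\Delta)$. The genuine gap is the step you yourself flag as ``the main obstacle'': a Borel assignment $X\mapsto (h_m(X))_m$ with $\overline{\text{span}}\{h_m(X)\}$ norm-dense in $X^*$. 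This is exactly Dodos' main technical result (Lemma \ref{dodoss} here, imported from \cite{D2}), and it is the hard point of the whole argument. Your sketch --- Asplundness, norm-fragmentability of $(B_{X^*},w^*)$, boundedness of a $\Pi^1_1$-rank on the Borel set $\B$, then a transfinite selection by weak$^*$ slices of small norm diameter --- is the right strategy and is essentially how Dodos proceeds; but ``this should allow an inductive construction'' is precisely where the work lives. The difficulty is not the existence, for each fixed $X$, of such a sequence (classical for spaces with separable dual), but carrying out the transfinite slicing \emph{uniformly Borel in $X$}: at each of the countably many stages one must make a Borel choice of slices and witnesses simultaneously for all $X\in\B$ and check that the resulting derived sets and selections stay Borel. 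As written this step is asserted, not proved.

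Separately, the proposal stops before reaching the statement it is meant to prove. Theorem \ref{princesa} asserts that $\A^*$ is analytic; you construct the Borel duality map and the pairing but never return to $\A^*$. The missing (easy, but necessary) steps are: since $\A\subset\SD$ is analytic and $\SD$ is coanalytic, Lusin's separation theorem provides a Borel $\B$ with $\A\subset\B\subset\SD$ --- this is where your Borel set $\B$ has to come from, and you never say so --- and then $\A^*=\{X\in\SB\mid\exists Y\in\A,\ Y^\bullet\cong X\}$ is analytic because $Y\mapsto Y^\bullet$ is Borel on $\B$, $\A$ is analytic, and the isomorphism relation $\cong\ \subset\SB\times\SB$ is analytic. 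In the paper this deduction \emph{is} the entire proof of Theorem \ref{princesa}; everything else you build belongs to the proof of Theorem \ref{f}. A minor further point: your pairing via a Borel selection of some $t_x\in\phi_X^{-1}$ of the canonical image of $x/\|x\|$ does work, but it requires an extra uniformization argument for the closed nonempty sections; the paper sidesteps this by computing $\langle g,x\rangle_X$ directly as $\lim_j x^*_{n_j}\|d_{n_j}(X)\|$ along a subsequence with $d_{n_j}(X)\to x$, which is Borel outright.
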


As $\SD$ is coanalytic, if $\A\subset \SD$ is analytic, Lusin's separation theorem  says that there exists a Borel set $\B\subset \SD$ with $\A\subset \B$. Apply \text{Theorem \ref{f}}  to this $\B$, and notice that $\A^*=\{X\in \SB\mid \exists Y\in \A,\  Y^\bullet\cong X\}$. Therefore, as the isomorphism relation $\cong\ \subset\SB\times \SB$ is analytic, \text{Theorem \ref{princesa}} can be obtained from \text{Theorem \ref{f}}.

In order to prove \text{Theorem \ref{f}}, we proceed as follows. Fix a Borel $\B\subset \SD$. First, for $X\in\B$, we code the  unit ball of the dual of $X$ by a subset of $B_{\ell_\infty}$ (see \text{Lemma \ref{dodos}}). We refer to this coding as \emph{Dodos' coding} (the reader can read more about it in \cite{D2}).  Using the main technical result of \cite{D2} (for a precise statement, see \text{Lemma \ref{dodoss}} below), we code the unit ball of the bidual of $X$ as a subset of $B_{\ell_\infty}$, for all $X\in\B$ (see \text{Lemma \ref{dodosdual}}). Those codings will allow us to talk about elements of the abstract spaces $X^*$ and $X^{**}$ as elements of their concrete codings in $B_{\ell_\infty}$. This will allow us to talk about Borel functions coding the functional operations given by elements of $X^*$, and $X^{**}$.  At last, we will use those codings and \text{Lemma \ref{rosendal}} in order to bring the codings of $X^*$  inside of $\SB$. Those three steps will give us \text{Theorem \ref{f}}.

Also, while proving Theorem $1$, we obtain a coding for the functional evaluation on the entire $\SB$. It is clearly not possible to obtain an assignment $X\in\SB\mapsto X^\bullet\in\SB$  as before. Indeed,  $\SB$ contains many spaces whose duals are non separable Banach spaces, hence if we demand $X^*\equiv X^\bullet$, we cannot have $X^\bullet\in\SB$. We are however capable of coding the functional evaluation on the entire $\SB$.

\begin{thm}\label{ahaiohoi}
There exists a Borel map $\Theta:\SB\to \cK(B_{\ell_\infty})$ such that, for each $X\in\SB$, $B_{X^*}\equiv \Theta(X)$, where the isometry between $B_{X^*}$ and $\Theta(X)$ is the restriction of a linear isometry between $X^*$ and $\overline{\text{span}}\{\Theta(X)\}$. Moreover, setting 

$$\A=\{(X,x,x^*)\in \SB\times C(\Delta)\times B_{\ell_\infty}\ |\  x\in X,x^*\in \Theta(X)\},$$\hfill

\noindent there exists a Borel map  $\langle\cdot,\cdot\rangle_{(\cdot)}:\A\to \R$ such that, for each $X\in \mathbb{B}$, 

\begin{enumerate}[(i)]
\item $\langle \cdot,\cdot\rangle_X$ is  norm continuous, and
\item  $x^*\in \Theta(X)\mapsto \langle x^*,\cdot\rangle_X\in B_{X^*}$ is a surjective isometry.
\end{enumerate}
\end{thm}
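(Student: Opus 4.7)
The plan is to extract from the proof of Theorem~\ref{f} precisely the portion that works for every $X \in \SB$, namely Dodos' coding of $B_{X^*}$ as a $w^*$-compact subset of $B_{\ell_\infty}$. The reason Theorem~\ref{f} needed $X^*$ separable was the final step, which brings the coding back inside $\SB \subset C(\Delta)$ via Lemma~\ref{rosendal}; here we keep the coding inside $\cK(B_{\ell_\infty})$, so that step is unnecessary.

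First I would fix (by a standard selection in the Bossard/Dodos framework) a Borel assignment $X \mapsto (d_n(X))_{n \in \N} \in C(\Delta)^\N$ with $\{d_n(X) : n \in \N\}$ dense in $B_X$ for each $X$. Define $T_X : X^* \to \ell_\infty$ by $T_X(x^*) = (x^*(d_n(X)))_n$. By density, $T_X$ is a linear isometry, and $T_X|_{B_{X^*}}$ is continuous from the $w^*$-topology to the product topology on $B_{\ell_\infty}$, so by Banach--Alaoglu the image $\Theta(X) := T_X(B_{X^*})$ is compact, and $T_X$ restricts to a surjective linear isometry $X^* \to \overline{\mathrm{span}}\,\Theta(X)$ whose restriction to $B_{X^*}$ realizes $B_{X^*} \equiv \Theta(X)$. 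For Borelness, let $R := \{(X, y) \in \SB \times B_{\ell_\infty} : y \in \Theta(X)\}$; a Hahn--Banach argument identifies $R$ as the Borel set of pairs $(X, y)$ such that $|\sum_i q_i y_{n_i}| \le \|\sum_i q_i d_{n_i}(X)\|$ for every finite rational sequence $(q_i)$ and integer sequence $(n_i)$ of the same length (the nontrivial inclusion: such a $y$ defines a well-defined norm-one functional on the $\Q$-span of $\{d_n(X)\}$, which extends to $X^*$ and is mapped to $y$ by $T_X$). Since $R$ has compact sections, Arsenin's theorem gives that $X \mapsto \Theta(X)$ is Borel into $\cK(B_{\ell_\infty})$.

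To construct the evaluation, I would canonically Borel-select, for $(X, x)$ in the Borel set $\{(X,x) : x \in X\}$, finite rational combinations $u_k(X, x) = \sum_i q_i^{(k)} d_{n_i^{(k)}}(X)$ with $\|u_k - x\| < 2^{-k}$ --- for instance, the lexicographically first such combination in a fixed enumeration of all pairs of finite rational and integer tuples. For $(X, x, x^*) \in \A$ with $x^* = (y_n)_n$, define
$$\langle x^*, x\rangle_X := \lim_{k\to\infty} \sum_i q_i^{(k)} y_{n_i^{(k)}}.$$
If $\tilde{x}^* \in B_{X^*}$ is the unique preimage of $x^*$ under $T_X$, then the $k$th partial sum equals $\tilde{x}^*(u_k)$ and converges to $\tilde{x}^*(x)$ by norm continuity, so the limit exists, is independent of the chosen approximation, and the overall map is Borel as a pointwise limit of Borel maps. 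Properties~(i) and~(ii) then follow immediately from $T_X$ being a surjective linear isometry carrying $B_{X^*}$ onto $\Theta(X)$. The main technical obstacle I anticipate is performing the initial selection $X \mapsto (d_n(X))$ and the Borel selection of approximations with enough uniformity so that all the pointwise estimates patch into a single Borel map $\A \to \R$ --- routine within the descriptive-set-theoretic framework of \cite{D2}, but requiring careful bookkeeping.
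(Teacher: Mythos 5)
Your proposal is correct and follows essentially the same route as the paper, which obtains this theorem as an immediate consequence of Lemma \ref{dodos} and Lemma \ref{dual}: Dodos' coding $\Theta(X)=K_{X^*}\subset B_{\ell_\infty}$ via the dense sequence $(d_n(X))$, Borelness of $X\mapsto\Theta(X)$ from the compact-sections theorem (the paper's Theorem \ref{ooiiou}, your Arsenin citation), and the evaluation as a limit of the coordinates along norm-approximations of $x$. The only deviations are cosmetic: you omit the normalization by $\|d_n(X)\|$, you make explicit the Hahn--Banach characterization of membership in $\Theta(X)$ (which the paper outsources to \cite{D2}), and you prove Borelness of the evaluation by a Borel selection of approximants rather than by the quantifier formula used in Lemma \ref{dual}.
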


It would be nice to get a global function such that its restriction to $\SD$ works as in \text{Theorem \ref{f}}.

\begin{problem}
Can we define the functions of \text{Theorem \ref{f}} globally? Precisely, is there a Borel assignment $X\in\SB\mapsto X^\bullet\in\SB$ such that, once restricted to $\SD$, the assignment has the same properties as in \text{Theorem \ref{f}}? What about a Borel map $\langle \cdot,\cdot\rangle_{(\cdot)}:\A\to\R$, where $\A=\{(X,x,g)\in\SB\times C(\Delta)\times C(\Delta)\mid x\in X,g\in X^\bullet\}$, such that, once restricted to $\A\cap ( \SD\times C(\Delta)\times C(\Delta))$, it has the same properties as in \text{Theorem \ref{f}}?
\end{problem}

The second half of the paper is devoted to Zippin's theorem. Zippin had shown (see \cite{Z}) that any Banach space with separable dual can be isomorphically embedded into a Banach space with a shrinking basis. We show the following Borel parametrized version of it. For each $Z\in\SB$, we let $\SB(Z)=\{X\in\SB\mid X\subset Z\}$.

\begin{theorem}\label{fim}
Say $\mathbb{B}\subset \SD$ is Borel. There exists a $Z\in \SD$, with a shrinking basis, and a Borel map $\Psi:\mathbb{B}\to \SB(Z)$ such that $X\cong \Psi(X)$, for all $X\in \mathbb{B}$. Moreover, setting $\mathbb{E}=\{(X,x)\in \mathbb{B}\times C(\Delta)\ |\ x\in X\}$, there exists a Borel map

$$\psi: \mathbb{E}\to Z$$\hfill

\noindent such that, letting $\psi_X=\psi(X,\cdot)$, we have that $\psi_X:X\to Z$  is a $10$-embedding, for all $X\in\mathbb{B}$. 
\end{theorem}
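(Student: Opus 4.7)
The plan is to combine Theorem \ref{f} with a Borel version of Zippin's construction, and to land all embeddings inside a single universal space with shrinking basis.

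First, I apply Theorem \ref{f} to $\B$ to obtain a Borel dual assignment $X\mapsto X^\bullet$ and a Borel functional evaluation $(X,x,g)\mapsto \langle g,x\rangle_X$. Using the Schauder basis of $C(\Delta)$ together with a Jankov--von Neumann type Borel uniformization, I extract a Borel map $X\mapsto(g^X_n)_n$ such that $(g^X_n)_n$ is norm-dense in $B_{X^\bullet}$. Because $\langle g^X_n,x\rangle_X$ is Borel in $(X,x)$, any condition involving the dual norm, finite sums of the $g^X_n$, or tests against the standard countable dense subset of $X\subset C(\Delta)$ becomes a Borel predicate in $X$; this is the technical engine that turns otherwise non-constructive Banach space arguments into Borel ones.

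Next, I recall Zippin's argument: given $X\in\SD$, one produces inductively a shrinking basic sequence $(f^X_k)\subset X^*$ whose associated biorthogonal construction embeds $X$ into a space with shrinking basis. Each step selects an index $n_k$ together with rational scalars so that the running sequence has basis constant at most $2$ (say) and is sufficiently norming on the span of finitely many previously chosen points of $X$. Given the Borel data $(g^X_n)_n$, each inductive step is an open condition in countably many Borel variables, so a bounded search yields a Borel choice and, iterating, a Borel map $X\mapsto(f^X_k)_k$ with a uniform basis constant. A universal space $Z$ with shrinking basis for the resulting (analytic) class of basic sequences can be obtained either via a Pe\l czy\'nski-style universal basis space followed by a blocking argument to secure shrinkingness, or via the Argyros--Dodos type universality results for Borel subsets of $\SD$. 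Using Borel uniformization, assign to $X$ a subsequence of the basis of $Z$ that is $(1+\eps)$-equivalent to $(f^X_k)_k$, and define $\psi_X$ on the canonical dense sequence of $X$ by rational combinations of that subsequence, extended by continuity. Then $\Psi(X):=\overline{\psi_X(X)}\in\SB(Z)$ is Borel, since it is the closure of a Borel sequence in $Z$. Tracking the basis constant, the equivalence constant, the duality passage, and the rational perturbation error gives the bound $10$.

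The main obstacle is making Zippin's inductive construction Borel: the original proof hides several non-effective choices (blocking, perturbation, ensuring shrinkingness), each of which must be replaced by a Borel rule uniform in $X\in\B$. Theorem \ref{f} is indispensable here, since without Borel access to $X^*$ and to the functional evaluation the open predicates driving each inductive step could not even be written as Borel conditions in $X$. A secondary technical point is arranging that the ambient $Z$ carries a shrinking \emph{basis} rather than merely a shrinking FDD, which is handled by the standard blocking argument inside any universal space with a shrinking FDD; one further has to verify that $Z$, once realized as a subspace of $C(\Delta)$, can be recorded as an element of $\SB$ in a way compatible with the chosen basis, so that $\psi(X,\cdot)$ genuinely takes values in a fixed concrete $Z\in\SB$.
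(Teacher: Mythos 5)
Your overall architecture (Borel access to duals, then an effectivized Zippin induction, then amalgamation into a single universal $Z$) is reasonable in outline, but two of its load-bearing steps do not work as stated. First, you repeatedly invoke ``Jankov--von Neumann type Borel uniformization''; Jankov--von Neumann yields only $\sigma(\Sigma^1_1)$-measurable (universally measurable) selections, not Borel ones, so neither the extraction of a Borel dense sequence $(g^X_n)_n$ in $B_{X^\bullet}$ nor the later assignment of a basis subsequence of $Z$ equivalent to $(f^X_k)_k$ is justified as Borel. The paper gets genuinely Borel selections from Dodos' uniformization lemma (Lemma \ref{dodoss}), which requires compact, norm-separable sections, and otherwise avoids uniformization entirely by making every choice explicit (e.g.\ the greedy selection map $b_\theta$). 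Second, and more seriously, your plan to effectivize Zippin's induction --- choosing $(f^X_k)\subset X^*$ step by step by ``bounded searches'' over open conditions so that the limit is a shrinking basic sequence --- cannot work: shrinkingness is a property of the entire infinite sequence, and the set of shrinking basic sequences is coanalytic non-Borel, so no finite list of open conditions imposed at each inductive step can guarantee it in the limit. This is precisely why the paper does not touch Zippin's original argument. Instead (Theorem \ref{parte}) it uses Bossard's Davis--Figiel--Johnson--Pelczynski $2$-interpolation spaces $Z(X)$, which carry shrinking bases $(e^X_k)$ for structural reasons, and the Borel content reduces to Lemmas \ref{1111} and \ref{2222} together with the dual-ball coding $K_{Z(X)^*}$ and the map $H$ of Lemma \ref{rosendal}, used to realize $Z(X)$ concretely inside $C(\Delta)$ as $\overline{\text{span}}\{\sigma(X)\}$.

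A related gap is the construction of the single ambient $Z$: Pe\l czy\'nski's universal basis space $U$ is not in $\SD$, and a ``blocking argument'' inside $U$ will not produce one space with a shrinking basis containing an entire analytic family of shrinking basic sequences. The paper's route is to push $\B$ forward to the analytic set $\xi_\theta(\B)$ of index sequences, separate it from the complement of the coanalytic set $\cS$ of shrinking subsequences by a Borel set (Lusin), realize that Borel set as the projection of a tree $T$, and take the $\ell_2$-Baire sum of the induced Schauder tree basis; Theorem \ref{yy} then yields the shrinking basis of $Z$, and the explicit map $I_{\sigma(X),\chi_\theta(X)}\circ\varphi_X$ gives the embedding. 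Without this (or an equivalent amalgamation step), your $Z$ either fails to lie in $\SD$ or fails to contain all the required copies, and the bookkeeping leading to the constant $10$ (a $9$-embedding from the interpolation step composed with a $\frac{1+\theta}{1-\theta}$-equivalence for small $\theta$) has nothing to rest on.
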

 
In \cite{DF}, Dodos and  V. Ferenczi had shown the following.

\begin{thm}\label{macaboa}\textbf{(Dodos and Ferenczi, 2007)}
Let $\A\subset \SD$ be analytic. There exists a Banach space $Z$ with a shrinking basis that contains an isomorphic copy of every $X\in\A$.
\end{thm}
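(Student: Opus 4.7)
The plan is to construct $Z$ and the embeddings $\psi_X$ together by Borel-parametrizing a shrinking FDD of each $X\in\B$ and amalgamating these over a universal Schauder-tree sum. Theorem \ref{f} supplies the essential Borel access to $X^*$ that is absent from the bare existence result of Theorem \ref{macaboa}, and this is what allows the embeddings to be Borel-measurable in $X$.

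First, apply Theorem \ref{f} to $\B$ to obtain $X\mapsto X^\bullet$ and the Borel pairing $\langle\cdot,\cdot\rangle_X$, and extract Borel dense sequences $(x_n^X)_n\subset X$ and $(g_n^X)_n\subset X^\bullet$ from the standard codings of closed linear subspaces of $C(\Delta)$. Every quantity I need—norms of finite linear combinations in $X$ and in $X^\bullet$, distances from a vector to a finite-dimensional span, and the pairing itself—is now Borel in $X$. Next, I inductively produce finite-dimensional subspaces $E_n^X\subset X$ and $F_n^X\subset X^\bullet$ of equal dimension, nested with $E_{n-1}^X\subset E_n^X$ and $F_{n-1}^X\subset F_n^X$, biorthogonal under the pairing, satisfying $\operatorname{dist}(x_k^X,E_n^X)<2^{-n}$ and $\operatorname{dist}(g_k^X,F_n^X)<2^{-n}$ for all $k\le n$, and such that the resulting biorthogonal projection $P_n^X:X\to E_n^X$ has $\|P_n^X\|\le C$ for a fixed constant $C$. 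Each inductive stage is a selection from a Polish space of finite tuples over a Borel relation with nonempty sections, handled by Kuratowski--Ryll-Nardzewski. The density of $(g_n^X)$ in $X^\bullet$, together with the approximation bound, forces $(P_n^X)^*f\to f$ in norm for every $f\in X^*$, so the FDD $(E_n^X/E_{n-1}^X)$ is shrinking with constant at most $C$.

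Now I build $Z$ directly. Fix a countable family $\{V_t\}_{t\in\NN}$ of finite-dimensional subspaces of $C(\Delta)$, dense in the Banach--Mazur compactum in each fixed dimension, and take $Z$ to be their $\ell_2$-Baire amalgamation along a fixed enumeration of $\NN$; by construction $Z$ has a shrinking basis concatenated from bases of the $V_t$. For each $X\in\B$, Borel-select nodes $t_n(X)\in\NN$ such that $V_{t_n(X)}$ is $(1+2^{-n})$-isomorphic to $E_n^X/E_{n-1}^X$; because $\NN$ is countable this is a routine Borel choice. Define $\psi_X(x)=\sum_n\iota_n^X\bigl(P_n^Xx-P_{n-1}^Xx\bigr)$, where $\iota_n^X$ is the corresponding $(1+2^{-n})$-isomorphism into the $t_n(X)$-block of $Z$. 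Uniform boundedness and convergence follow from the bound $C$ and the shrinking property, and after a rescaling depending only on $C$, the map $\psi_X$ is a $10$-embedding. The function $\psi:\mathbb{E}\to Z$ is Borel because each partial sum is Borel in $(X,x)$ and the series converges uniformly on norm-bounded subsets of $X$; $\Psi(X)=\overline{\psi_X(X)}$ is Borel into $\SB(Z)$ via the standard coding of a closed linear span by a dense sequence.

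The principal obstacle is the second step: Zippin's classical argument makes ad hoc existential choices, and reworking them as Borel selections while preserving a uniform projection-constant bound requires rephrasing biorthogonality, approximation of the $x_k^X$ and $g_k^X$, and the norm bounds on $P_n^X$ as Borel conditions in $X$. This is where Theorem \ref{f} is indispensable, since without Borel access to $X^*$ and to the pairing there would be no way to Borel-select norming functionals or to evaluate dual norms, and thus no way to ensure the shrinking property uniformly in $X$.
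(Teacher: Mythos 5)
Your overall architecture (produce a shrinking decomposition for each $X$ in a Borel way, then amalgamate over a universal tree sum) is the right shape, and you are correct that the theorem follows from the Borel parametrized version via Lusin separation. But the second step of your proposal is impossible as stated: the nested finite-dimensional subspaces $E_n^X\subset X$ with uniformly bounded biorthogonal projections $P_n^X:X\to E_n^X$ and $\operatorname{dist}(x_k^X,E_n^X)<2^{-n}$ would constitute a finite-dimensional decomposition of $X$ \emph{itself}, and a space admitting an FDD has the bounded approximation property. There are spaces in $\SD$ failing the approximation property (subspaces of $c_0$, or reflexive subspaces of $\ell_p$, $p\neq 2$), so no amount of Borel selection can produce these $E_n^X$. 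This is exactly the obstruction that forces the paper to work not inside $X$ but inside the Davis--Figiel--Johnson--Pelczynski interpolation space $Z(X)$, which \emph{does} have a shrinking basis $(e^X_k)$ and into which $X$ $9$-embeds; the Borel content (Theorem \ref{parte}) is then about selecting a copy of that basis in $C(\Delta)$ via the coding $K_{Z(X)^*}$ and Lemma \ref{rosendal}, not about decomposing $X$. Theorem \ref{f} is not actually used for this.

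The amalgamation step also has two gaps. First, matching each quotient block $E_n^X/E_{n-1}^X$ to some $V_{t_n(X)}$ up to $(1+2^{-n})$-isomorphism controls only the individual blocks, not the norm of sums across blocks, so there is no reason $\psi_X$ should be an isomorphic embedding; Schechtman's construction avoids this by choosing $g_s=d_{m_s}$ to be actual vectors of $C(\Delta)$ norm-close to the basis vectors, so that the branch norm in $U$ reproduces the original norm of the whole linear combination (principle of small perturbations). Second, an $\ell_2$-Baire sum over \emph{all} of $\NN$ with a Banach--Mazur-dense family of finite-dimensional spaces cannot have a shrinking basis: branches of the tree sit isometrically inside the Baire sum, and a dense family contains branches generating $\ell_1$. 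The paper's construction is shrinking only because the tree $T$ is cut down so that its branches project into a Borel set $A$ with $\xi_\theta(\B)\subset A\subset\cS$, obtained by Lusin separation against the coanalytic set $\cS$ of shrinking subsequences, after which Theorem \ref{yy} applies. This separation step is essential and is missing from your argument.
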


Hence, Theorem \ref{fim} can be seen as an improvement of Dodos and  Ferenczi's theorem. Indeed, if $\A\subset \SD$ is analytic, then, as $\SD$ is coanalytic, Lusin's separation theorem gives us a Borel set $\B$ such that $\A\subset \B\subset \SD$. Therefore, applying Theorem \ref{fim} to $\B$, we obtain not only Theorem \ref{macaboa}, but also that its result can be obtained by a Borel function.

The proof of \text{Theorem \ref{fim}}, is divided into two parts. In \cite{DF}, Dodos and  Ferenczi, had shown, using results due B. Bossard (see \cite{B}),  that if $\A\subset \SD$ is analytic, then there exists an analytic set $\A'\subset \SB$ such that (i) every $X\in\A$ embeds into some $Y\in \A'$, and (ii) every $Y\in\A'$ has a shrinking basis. Following Bossard's work (see \cite{B}, or \cite{D1}, chapter $5$), we show that this result can be obtained by a Borel function. Precisely, we show that if $\B\subset \SD$ is Borel, then there exists a Borel function $\sigma:\B\to C(\Delta)^\N$ which, for each $X\in\B$, selects a shrinking basis whose span contains an isomorphic copy of $X$ (see \text{Theorem \ref{parte}} for a precise statement).

Finally, we show that if we have a Borel set of normalized shrinking basic sequences $U\subset S_{C(\Delta)}^\N$, we can find not only a space $Z\in \SD$  containing all those basis (as it is done in \cite{AD}), but also an assignment  

$$(x_n)\in U\mapsto X(\cong \overline{\text{span}}\{x_n\})\in \SB(Z)$$\hfill

\noindent  which is Borel. Combining those two steps we get the Borel parametrized version of Zippin's theorem.

Our main references for these notes are Dodos' book \emph{Banach spaces and descriptive set theory: Selected topics}, Dodos' paper \emph{Definability under Duality},   S. Argyros and Dodos'  paper \emph{Genericity and amalgamation of classes of Banach spaces}, 
B. Bossard's paper \emph{An ordinal version of some applications of the classical interpolation
theorem,} and
 Dodos and  Ferenczi's paper \emph{ Some strongly bounded classes of Banach spaces}. What we do in these notes is basically to show that some of the  results obtained in those papers can be actually obtained uniformly by Borel functions on Borel subsets of $\SD$.

\section{Notation.}\label{notation}
Let $\SB=\{X\subset C(\Delta)\ |\ X\text{ is closed and linear}\}$, $\SB$ will be our coding for the separable Banach spaces, where $C(\Delta)$ is the space of continuous functions on the Cantor set $\Delta$ (i.e., $2^\N$) endowed with the supremum norm. We endow $\SB$ with its Effros-Borel structure, i.e., the $\sigma$-algebra generated by 

$$\{X\in\SB\ |\ X\cap U\neq \emptyset\},$$\hfill

\noindent where $U$ varies among the open subsets of $C(\Delta)$. It is well known that $\SB$ is a standard Borel space with the Effros-Borel structure (see \cite{D1}, \text{Theorem 2.2}). We denote by $\SD$ the subset of $\SB$ consisting of Banach spaces with separable duals, $\SD$ is well known to be complete coanalytic (hence non Borel), as shown above. For every Banach space $X$, we denote the  unit ball of $X$ by $B_X$. Unless stated otherwise, we will always consider the unit ball $B_{X^{*}}$ endowed with its weak$^*$-topology. So, for all $X\in\SB$, $B_{X^*}$ is a compact metric space. We denote by $S_X$ the unit sphere of $X$, where $X$ is a Banach space.

Similarly as above, if $X$ is a Polish space, we can endow  $\cF(X)$, the set of non empty closed subsets of $X$,  with the Effros-Borel structure. Kuratowski and Ryll-Nardzewski's selection theorem gives us that, for any Polish space $X$, there exists a sequence of Borel functions $d_n:\cF(X)\to X$  such that, for all $F\in\cF(X)$,  the sequence $(d_n(F))_{n\in\N}$ is dense in $F$ (see \cite{Ke}, \text{Theorem 12.13}).  In these notes, we denote by $d_n:\cF(C[0,1])\to C(\Delta)$ the sequence above, where $X=C[0,1]$. Moreover, by taking rational linear combinations, we assume $(d_n)_{n\in\N}$ is closed under rational linear combinations. As $X\in \SB\mapsto B_X\in\cF(C(\Delta))$ is a Borel map, the maps $X\mapsto d_n(B_X)$ are also Borel, and $(d_n(B_X))_{n\in\N}$ is dense in $B_X$, for all $X\in\SB$.

Elements of $B_{X^*}$ will usually be denoted by $f$, while elements of $B_{\ell_\infty}$ will usually be denoted by $x^*$ or $x^{**}$ (depending whether this element is coding a functional from  $B_{X^*}$ or $B_{X^{**}}$). The reader should always have in mind that $x^*\in B_{\ell_\infty}$ actually denotes a bounded sequence $x^*=(x^*_n)_{n\in\N}\in B_{\ell_\infty}$.

In order to simplify notation, many times we omit the index of sequences, writing $(x_n)$ instead of $(x_n)_{n\in\N}$. We do the same for sums, i.e., we write $\sum_nx_n$ instead of $\sum_{n\in\N}x_n$ or $\sum_{n=1}^kx_n$. We hope this will not cause any confusion to the reader. 

When dealing with functionals, say $f\in X^*$ and $x\in X$, we use both $``f(x)"$ and $``\langle f,x\rangle"$ to denote the value of the functional $f$ evaluated at $x$. Also, as we will be dealing with many spaces and norms, in order to have a cleaner notation, we will usually simply write $\|x\|$ instead of $\|x\|_X$ to denote the norm of $x$ in $X$, where $x\in X$. The spaces in which the elements whose norms are being computed lie in should always be clear, and if there is room for any ambiguity we will specify the norm we are working with.

Say $X$ and $Y$ are Banach spaces. We write $X\equiv Y$ to denote that $X$ is linearly isometric to $Y$, and we write $X\cong Y$ to denote that $X$ is (linearly) isomorphic to $Y$. Also, if $X$ and $Y$ are metric spaces, we write $X\equiv Y$ to denote that $X$ and $Y$ are isometric as metric spaces. If $(x_n)$ and $(y_n)$ are two basic sequences, we write $(x_n)\sim (y_n)$ to denote that $(x_n)$ is equivalent to $(y_n)$, i.e., $x_n\mapsto y_n$ defines an isomorphism between $\overline{\text{span}}\{x_n\}$ and $\overline{\text{span}}\{y_n\}$.

Let $X$ be a metric space. We  denote by $\cK(X)$ the hyperspace of $X$, i.e., the space of all compact subsets of $X$ endowed with the Vietoris topology (which in metric spaces is equivalent to the topology generated by the Hausdorff metric), the reader can find more about the hyperspace $\cK(X)$ in \cite{Ke}, Section $4.F$.

In order to simplify notation  when working with many quantifiers in the same sentence, we will assume $``n,m\in\N"$ and $``\delta,\eps\in\Q^+"$. For example, we only write $``\exists \delta"$  instead of $``\exists \delta \in \Q^+"$. Similarly, $``\exists a_1,...,a_n"$ should be interpreted as $``\exists a_1,...,a_n\in \Q"$. The set in which we are quantifying over should always be clear.

Denote by $[\N]^{<\N}$ the set of all increasing finite tuples of natural numbers, and $[\N]^\N$ the set of all increasing sequence of natural numbers. As $[\N]^\N\subset \N^\N$ is Borel, we have that $[\N]^\N$ is a standard Borel space.  Also, if $A$ is any set, let $A^{<\N}$ denote the set of finite subsets of $A$. Given $s=(s_0,...,s_{n-1}),\allowbreak t=(t_0,...,t_{m-1})\in A^{<\N}$ we say that the length of $s$ is 
$|s|=n$, $s_{|i}=(s_0,...,s_{i-1})$, for all $i\in\{1,...,n\}$, and $s_{|0}=\{\emptyset\}$. We say that $s\preceq t$ \emph{iff} $n\leqslant m$ and $s_i=t_i$, for all $i\in\{0,...,n-1\}$, i.e., if $t$ is an 
extension of $s$. We define $s\prec t$ analogously.  

A subset $T$ of $A^{<\N}$ is called a 
\emph{tree} (on A) if $t\in T$ implies $t_{|i}\in T$, for all $i\in\{0,...,|t|\}$.  A tree $T$ is called pruned if for all $s\in T$ there exists $t\in T$ such that $s\prec t$. We denote by $[T]$ the set $\{\sigma\in\N^\N\ |\ \forall n\ \sigma_{|n}\in T\}$. A subset $I$ of a tree $T$ is called a \emph{segment} if $I$ is 
completely ordered and if $s,t\in I$ with $s\preceq t$, then $l\in I$, for all $l\in T$ such that $s\preceq l\preceq t$. Two segments $I_1,\ I_2$ are called \emph{completely incomparable} if neither 
$s\preceq t$ nor $t\preceq s$ hold, for all $s\in I_1$ and $t\in I_2$.

A \emph{B-tree} on a subset $A$ is a subset $S\subset A^{<\N}\setminus \{\emptyset\}$ such that $S=T\setminus \{\emptyset\}$, for some tree $T$ on  $A$. In other words, a B-tree is a tree without its root. All the definitions in the previous paragraph extend to B-trees.

Let $X\in \SB$, $A$ be a countable set, $T$ be a pruned B-tree on $A$, and $(x_t)_{t\in T}$ be a normalized sequence of elements of $X$ indexed by $T$. We say that $(X,A, T,(x_t)_{t\in T})$ is a \emph{Schauder tree basis} if 

\begin{enumerate}[(i)]
\item $X=\overline{\text{span}}\{x_t\ |\ t\in T\}$, and
\item for every $\sigma\in[T]$, the sequence $(x_{\sigma_{|n}})_{n\in\N}$ is a bi-monotone basic sequence.
\end{enumerate}

Let $(X,A, T,(x_t)_{t\in T})$ be a Schauder tree basis. We define the \emph{$\ell_2$-Baire sum} of $(X,A, T,\allowbreak (x_t)_{t\in T})$ as the completion of $c_{00}(T)$ endowed with the norm

\begin{align*}
\|z\|=\sup\Big\{\Big(\sum_{n=1}^k\|\sum_{s\in I_n} z_sx_s\|_X^2\Big)^{\frac{1}{2}}|\ I_1,...,I_k\text{ completely }&\text{incomparable}\\
&\text{ segments of }T\Big\},
\end{align*}\hfill

\noindent for all $z=(z_s)_{s\in T}\in c_{00}(T)$. By abuse of notation, we still denote by $x_t$ the elements of the $\ell_2$-Baire sum  corresponding to the original $x_t\in X$ (see \cite{D1}, chapter $3$, for details on Schauder tree basis and $\ell_2$-Baire sums).

Let $\varphi: T\to \N$ be a bijection such that, for all $s\preceq t\in T$, we have $\varphi(s)\leqslant \varphi(t)$. 

\begin{thm}\label{yy}\textbf{(see \cite{D1}, corollary $3.29$)} Let $(X,A, T,(x_t)_{t\in T})$ be a Schauder tree basis. Assume that for all $\sigma\in[T]$ the basic sequence $(x_{\sigma_{|n}})_{n\in\N}$ is shrinking. Then $(x_{\varphi^{-1}(n)})_n$ is a shrinking basis for the $\ell_2$-Baire sum of $(X,A, T,(x_t)_{t\in T})$.
\end{thm}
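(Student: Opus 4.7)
The plan is to verify two properties separately: that $(x_{\varphi^{-1}(n)})_n$ is a bi-monotone Schauder basis for the $\ell_2$-Baire sum $Y$ of $(X,A,T,(x_t)_{t\in T})$, and that this basis is shrinking under the branchwise shrinking hypothesis.

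For the basis property, the key observation is that $\varphi$ preserves the tree order, so for every $N$ the set $T_N:=\{\varphi^{-1}(1),\dots,\varphi^{-1}(N)\}$ is an initial subtree of $T$. Given $z=\sum_t z_t x_t\in c_{00}(T)$ and any family $I_1,\dots,I_k$ of completely incomparable segments of $T$, the intersections $I_j\cap T_N$ (respectively $I_j\setminus T_N$) remain convex and linearly ordered, hence are themselves segments of $T$, and pairwise they are still completely incomparable. Combined with the bi-monotonicity of each branch basic sequence $(x_{\sigma_{|n}})_n$ inside $X$, this yields $\|P_N z\|\leqslant \|z\|$ and $\|z-P_N z\|\leqslant \|z\|$, where $P_N$ is the coordinate projection onto $T_N$. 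Hence $(x_{\varphi^{-1}(n)})_n$ is a bi-monotone Schauder basis of $Y$.

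For the shrinking property, I would show that every normalized block sequence $(z_k)_k$ of the basis is weakly null. Suppose towards a contradiction that this fails: there exist $f\in Y^*$ with $\|f\|\leqslant 1$ and $\delta>0$ so that, after passing to a subsequence, $f(z_k)\geqslant \delta$ for all $k$. Each $z_k$ has finite support $F_k\subset T$ and, by the block property and order-preservation of $\varphi$, the $F_k$ are pairwise disjoint with levels tending to infinity along every fixed branch. For each $k$, pick completely incomparable segments $I^k_1,\dots,I^k_{m_k}$ with $\sum_j\|\sum_{s\in I^k_j}(z_k)_s x_s\|_X^2\geqslant 1-2^{-k}$, and record the branch $\sigma^k_j\in[T]$ through which $I^k_j$ passes.

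The hard part is converting this setup into a contradiction, and I expect it to be the main obstacle. The natural strategy is a dichotomy on how the mass of $(z_k)$ is distributed over branches. If, along a subsequence, the significant part of each $z_k$ lies on a single branch $\sigma$, the support levels go to infinity and the shrinking of the branch basis $(x_{\sigma_{|n}})_n$ forces the restriction of $f$ to $X_\sigma=\overline{\mathrm{span}}\{x_{\sigma_{|n}}\}$ to have vanishing tail norms, so $f(z_k)\to 0$, contradicting $f(z_k)\geqslant \delta$. Otherwise, the segments $I^k_j$ spread across many branches, and one controls $f$ by Hahn--Banach extending each restriction $f|_{\mathrm{span}\{x_s:s\in I^k_j\}}$ to a functional $g^k_j\in X_{\sigma^k_j}^*$. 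The $\ell_2$-structure of the Baire sum norm dualises to yield $\sum_j\|g^k_j\|^2\leqslant \|f\|^2\leqslant 1$, while Cauchy--Schwarz applied to the lower bound $f(z_k)\geqslant \delta$ together with $\sum_j\|\sum_{s\in I^k_j}(z_k)_s x_s\|_X^2\approx 1$ forces $\sum_j\|g^k_j\|^2\gtrsim \delta^2$ uniformly in $k$. Splitting branches into those hit only finitely often (handled by uniform $\ell_2$-summability) and those hit infinitely often (handled by branchwise shrinking and the level condition), a diagonal extraction yields the contradiction. Formalising this dichotomy, and in particular describing $Y^*$ concretely enough to justify the branchwise Hahn--Banach extension with control on the $\ell_2$-norms, is the step that will require the most care.
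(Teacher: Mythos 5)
First, a point of comparison: the paper does not prove this statement at all --- Theorem \ref{yy} is imported verbatim from Dodos' book (\cite{D1}, Corollary $3.29$) and used as a black box, so there is no in-paper proof to measure your attempt against. Judged on its own, your first half is fine: since $\varphi$ is order-preserving, $T_N=\{\varphi^{-1}(1),\dots,\varphi^{-1}(N)\}$ is downward closed, the intersection of a segment with $T_N$ (or its complement) is again a segment, and complete incomparability survives passing to subsets; this gives $\|P_Nz\|\leqslant\|z\|$ and $\|z-P_Nz\|\leqslant\|z\|$ directly from the definition of the Baire-sum norm. That part is correct and standard.

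The shrinking half, however, is a strategy outline rather than a proof, and the step you yourself flag as "the main obstacle" is in fact the entire content of the theorem. Two concrete problems. (a) You choose the segments $I^k_1,\dots,I^k_{m_k}$ so as to nearly norm $z_k$, and then want to conclude that $f$ acts essentially through them. But $\sum_j\|P_{I^k_j}z_k\|^2\geqslant 1-2^{-k}$ gives no upper bound on $\|z_k-\sum_jP_{I^k_j}z_k\|$ (there is no orthogonality in this norm; that difference can have norm comparable to $1$), so $f(z_k)\geqslant\delta$ does not force $f\big(\sum_jP_{I^k_j}z_k\big)\gtrsim\delta$, and the claimed lower bound $\sum_j\|g^k_j\|^2\gtrsim\delta^2$ does not follow. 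The segments have to be chosen adapted to $f$, not to $z_k$, and making that selection uniformly in $k$ is where the work lies. (b) The case of the dichotomy in which $\|P_\sigma z_k\|\to 0$ for every branch $\sigma$ (the $X$-singular case, in Dodos' terminology) is not "handled by uniform $\ell_2$-summability and a diagonal extraction": since $m_k$ and the family of branches involved are unbounded in $k$, one needs the genuinely nontrivial structural results of \cite{D1}, Chapter $3$ --- the splitting of an arbitrary bounded block sequence into a part supported by finitely many branches plus an $X$-singular part, and the theorem that $X$-singular block sequences admit weakly null (indeed unconditional, $\ell_2$-behaved) subsequences. Without proving some version of those lemmas, the argument does not close; as written, the proposal establishes the basis property but not shrinkingness.
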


\section{Duality on Banach spaces.}

Our goal in this section is to prove \text{Theorem \ref{f}}, i.e., we will show how to obtain the assignment $X\mapsto X^*$  in a Borel fashion. Precisely, given a Borel subset of $\SD=\{X\in\SB\mid X^*\text{ is separable}\}$, say $\B\subset \SD$,  we will define a Borel function $X\in\B\mapsto X^\bullet\in\SB$ such that $X^\bullet $ is isometric to $X^*$, for all $X\in\B$. Moreover, we will keep track of the isometries between $X^\bullet$ and $X^*$ in such a way that it will be possible to actually interpret the elements of $X^\bullet$ as elements of $X^*$, i.e., we will be capable of computing $\langle g,x\rangle_X$ in a Borel manner, for all $X\in\B$, all $x\in X$, and all $g\in X^\bullet$.

In order to prove \text{Theorem \ref{f}}, our main tools will be Dodos' coding for the unit ball $B_{X^*}$, \text{Lemma \ref{dodoss}}, and  \text{Lemma} \ref{rosendal}, which will allow us to bring families of separable Banach spaces into our coding $\SB$.

We start by describing Dodos coding of $B_{X^*}$ as a subset of $B_{\ell_\infty}$. Given $X\in \SB$, we code $B_{X^*}$ by letting

$$K_{X^*}=\Big\{x^*\in B_{\ell_\infty}\ |\ \exists f\in B_{X^*}\forall n \ x^*_n=\frac{f(d_n(X))}{\|d_n(X)\|}\Big\}\subset B_{\ell_\infty},$$\hfill

\noindent where if $d_n(X)=0$, we let $x^*_n=0$ above. It is not hard to see (\cite{D2}, Section $3$) that the set $\mathbb{D}\subset \SB\times B_{\ell_\infty}$ defined by 

$$(X,x^*)\in \mathbb{D} \Leftrightarrow  x^*\in K_{X^*}$$\hfill

\noindent is Borel. Also, for a given $X\in \SB$, the natural map 

$$f\in B_{X^*}\mapsto \Big(\frac{f(d_n(X))}{\|d_n(X)\|}\Big)_n\in K_{X^*}$$ \hfill

\noindent is a surjective isometry (see \cite{D2}, Section $3$). Moreover, the isometry is ``linear'', i.e., if $f,g\in B_{X^*}$ and $\alpha f+\beta g\in B_{X^*}$, then 

$$\alpha f+\beta g\mapsto \alpha \Big(\frac{f(d_n(X))}{\|d_n(X)\|}\Big)_n +\beta \Big(\frac{g(d_n(X))}{\|d_n(X)\|}\Big)_n.$$\hfill

\noindent This isometry between the compact metric spaces $K_{X^*}$ and $B_{X^*}$ is actually the restriction of an isometry between the Banach spaces $\text{span} \{K_{X^*}\}$ and $X^*$. This observation will be used in the proof of \text{Theorem \ref{f}}.

We will need the following result (see \cite{Ke}, \text{Theorem $28.8$}).

\begin{theorem}\label{ooiiou}
Let $\B$ be a standard Borel space, $Y$ be a Polish space, and $\D\subset X\times Y$ be a Borel set, all of whose sections $\D_x=\{y\in Y\ |\ (x,y)\in \D\}$ are compact. Then the map $x\mapsto \D_x$ is Borel as a map $\B\to \cK(Y)$.
\end{theorem}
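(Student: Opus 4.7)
The proof I have in mind reduces the problem to the classical Arsenin-Kunugui projection theorem. Fix a countable open base $(V_n)$ for $Y$. A standard subbase for the Vietoris topology on $\cK(Y)$ is given by the sets $\{K : K \cap V \neq \emptyset\}$ and $\{K : K \subset V\}$, where $V$ ranges over finite unions of the $V_n$; since $Y$ is second-countable and $\cK(Y)$ is Polish, this countable subbase generates the Borel $\sigma$-algebra of $\cK(Y)$. Hence the map $x \mapsto \D_x$ is Borel if and only if, for every open $U \subset Y$,
$$A_U := \{x \in \B : \D_x \cap U \neq \emptyset\} \quad \text{and} \quad B_U := \{x \in \B : \D_x \subset U\}$$
are Borel subsets of $\B$.

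I would then express both sets through projections. Letting $\pi_\B : \B \times Y \to \B$ denote the canonical projection,
$$A_U = \pi_\B\bigl(\D \cap (\B \times U)\bigr), \qquad \B \setminus B_U = \pi_\B\bigl(\D \cap (\B \times (Y \setminus U))\bigr).$$
The vertical section of $\D \cap (\B \times (Y \setminus U))$ at $x$ is $\D_x \cap (Y \setminus U)$, a closed subset of the compact set $\D_x$, hence compact; the vertical section of $\D \cap (\B \times U)$ at $x$ is an open subset of $\D_x$, hence $K_\sigma$ in the Polish space $Y$. Thus in both cases one is projecting a Borel subset of $\B \times Y$ whose vertical sections are $K_\sigma$.

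The main obstacle is the Arsenin-Kunugui projection theorem: the projection to $\B$ of a Borel subset of $\B \times Y$ with $K_\sigma$ vertical sections is Borel. I would either invoke this as a standard tool from descriptive set theory or sketch a direct proof. A direct argument exhausts each $K_\sigma$ section by an increasing sequence of compact pieces, reducing the statement to the compact-section case; one then builds, via a Suslin-scheme recursion that exploits compactness, countably many Borel partial selectors whose graphs cover $\D$ fiberwise, and expresses the projection as the (Borel) union of their domains. Once the Arsenin-Kunugui theorem is applied to both displayed projections, $A_U$ and $B_U$ are seen to be Borel, completing the verification that $x \mapsto \D_x$ is a Borel map $\B \to \cK(Y)$.
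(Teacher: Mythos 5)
Your argument is correct, but note that the paper does not prove this statement at all: it is quoted verbatim from Kechris's book (Theorem 28.8 of \cite{Ke}) and used as a black box, so there is no in-paper proof to compare against. What you have written is essentially the standard proof, i.e.\ the one found in \cite{Ke}: reduce Borelness of $x\mapsto \D_x$ to Borelness of the preimages of the countably many subbasic Vietoris sets, and express $A_U$ and the complement of $B_U$ as projections of Borel sets with small sections. The one place where you reach for more machinery than necessary is the invocation of the full Arsenin--Kunugui theorem for $K_\sigma$ sections. For $B_U$ the sections of $\D\cap(\B\times(Y\setminus U))$ are already compact, and for $A_U$ you can write $U=\bigcup_n F_n$ with each $F_n$ closed, so that $A_U=\bigcup_n \pi_\B\bigl(\D\cap(\B\times F_n)\bigr)$ with each set in the union having compact sections; thus only the compact-section projection theorem (Novikov) is needed, and the genuinely hard content of Arsenin--Kunugui is avoided. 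Relatedly, your ``direct argument'' sketch for Arsenin--Kunugui --- exhausting each $K_\sigma$ section by compact pieces and reducing to the compact case --- quietly assumes that this exhaustion can be chosen \emph{Borel-uniformly in $x$}, which is precisely the hard step of that theorem (Saint-Raymond's decomposition); in your application the uniformity is free because the exhaustion comes from $U$ alone, so it would be cleaner to say that explicitly rather than to appeal to the general theorem.
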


The lemma below is a simple application of the theorem above and it summarizes what we need regarding the Dodos' coding described above.

\begin{lemma}\label{dodos}
The map 

$$ X\in\SB\mapsto K_{X^*}\in\cK(B_{\ell_\infty})$$\hfill

\noindent is Borel. Moreover, for all $X\in \SB$, there exists an onto isometry $i_X:K_{X^*}\to B_{X^*}$ such that, if $f=i_X(x^*)$, then $x^*_n=f(d_n(X))/\|d_n(X)\|$, for all $n\in\N$ such that $d_n(X)\neq 0$, and $x^*_n=0 $ otherwise. The isometries $i_X$ are restrictions of linear isometries $\text{span}\{K_{X^*}\}\to X^*$.
\end{lemma}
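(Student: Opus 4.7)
The plan is to package both halves of the lemma from facts already recorded in the paragraphs preceding the statement, combined with Theorem \ref{ooiiou}. For the Borel measurability of $X \mapsto K_{X^*}$, I would consider the set $\mathbb{D} \subset \SB \times B_{\ell_\infty}$ defined by $(X,x^*) \in \mathbb{D} \Leftrightarrow x^* \in K_{X^*}$. Since $\mathbb{D}$ is known to be Borel (the cited fact from \cite{D2}), all that remains in order to invoke Theorem \ref{ooiiou} is to verify that each section $\mathbb{D}_X = K_{X^*}$ is compact in $B_{\ell_\infty}$, equipped with its product topology (under which it is compact metrizable).

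To obtain this compactness, I would observe that $K_{X^*}$ is the image of the map
\[
j_X : f \in (B_{X^*},w^*) \longmapsto \Bigl(\tfrac{f(d_n(X))}{\|d_n(X)\|}\Bigr)_n \in B_{\ell_\infty},
\]
where the $n$-th entry is read as $0$ whenever $d_n(X) = 0$. Each coordinate projection $f \mapsto f(d_n(X))/\|d_n(X)\|$ is weak$^*$-continuous, so $j_X$ sends the weak$^*$-compact ball $B_{X^*}$ continuously into $B_{\ell_\infty}$, and therefore its image $K_{X^*}$ is compact. This compactness check is the only nontrivial step for the Borel half of the lemma, and I expect it to be the main (though mild) obstacle; once it is in hand, Theorem \ref{ooiiou} immediately gives the Borel measurability of $X \mapsto K_{X^*}$ as a map into $\cK(B_{\ell_\infty})$.

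For the second part, given $x^* \in K_{X^*}$ there is by definition an $f \in B_{X^*}$ realizing $x^*$, and this $f$ is unique because $(d_n(X))_n$ is dense in $B_X$, hence $f$ is determined on a total subset of $X$. Setting $i_X(x^*) := f$ therefore yields a well-defined surjection $i_X : K_{X^*} \to B_{X^*}$ inverting $j_X$. It is an isometry because the normalized elements $d_n(X)/\|d_n(X)\|$ (for $d_n(X) \neq 0$) are dense in $S_X$, whence
\[
\|f\|_{X^*} = \sup_n \frac{|f(d_n(X))|}{\|d_n(X)\|} = \|x^*\|_{\ell_\infty}.
\]

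Finally, to extend $i_X$ to a linear isometry $\text{span}\{K_{X^*}\} \to X^*$, I would set $\tilde i_X\bigl(\sum_j \alpha_j x^{*,j}\bigr) := \sum_j \alpha_j i_X(x^{*,j})$. Both well-definedness and the isometry property reduce to the same density argument: for a finite combination $x^* = \sum_j \alpha_j x^{*,j}$, the $n$-th coordinate of $x^*$ equals $\bigl(\sum_j \alpha_j i_X(x^{*,j})\bigr)(d_n(X))/\|d_n(X)\|$, so taking suprema over $n$ yields $\|x^*\|_{\ell_\infty} = \bigl\|\sum_j \alpha_j i_X(x^{*,j})\bigr\|_{X^*}$ by the same density of the normalized $d_n(X)$'s in $S_X$. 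This simultaneously shows that the extension is well-defined (the combination vanishes in $\ell_\infty$ iff it vanishes in $X^*$) and that it is isometric, completing the proof.
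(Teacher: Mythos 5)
Your proposal is correct and follows the same route the paper intends: it applies Theorem \ref{ooiiou} to the Borel set $\mathbb{D}$ after checking that each section $K_{X^*}$ is compact (as the continuous image of $(B_{X^*},w^*)$ under the coordinate map), and derives the isometry statements from the density of $(d_n(X))_n$ in $B_X$, exactly the facts from \cite{D2} that the paper cites without proof. The only difference is that you spell out the routine verifications (compactness, uniqueness of the realizing functional, and the well-definedness of the linear extension) that the paper leaves implicit.
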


\textbf{Remark:} The lemma above can also be obtained by \text{Lemma \ref{dodoss}}, which we will  state  and use below. However, as \text{Theorem \ref{ooiiou}} is more standard, we prefer to obtain this lemma by it.

We had already defined our coding for $X^*$, let us now define our coding for $X^{**}$. For this, we will need the following result of Dodos (see \cite{D2}, Section $1$). First we need to introduce some notation. Let $\A\subset \B\subset B_{\ell_\infty}$, we say that $\A$ is \emph{norm dense} in $\B$ if $\A$ is dense in $\B$ with respect to the norm topology of $B_{\ell_\infty} $. Similarly, we say $\A\subset B_{\ell_\infty}$ is \emph{norm separable} if it is separable with respect to the norm topology of $B_{\ell_\infty}$.

\begin{lemma}\label{dodoss}\textbf{(Dodos, 2010)} Let $\B$ be a standard Borel space, and let $\D\subset \B\times B_{\ell_\infty}$ be a Borel subset. Assume that, for each $x\in \B$, we have

\begin{enumerate}[(i)]
\item $\D_x=\{f\in B_{\ell_\infty}\ |\  (x,f)\in \D\}$ is non-empty and compact, and
\item $\D_x$ is norm separable.
\end{enumerate}

Then, there exists a sequence of Borel uniformizations of $\D$, $g_n:\B\to B_{\ell_\infty}$, such that $(g_n(x))_n$ is norm dense in $\D_x$, for each $x\in \B$. 
\end{lemma}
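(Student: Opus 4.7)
The plan is a two-stage construction: first produce Borel uniformizations that are weak$^*$-dense in each $\D_x$, and then refine them to be norm-dense using hypothesis (ii) together with the Borel structure of the norm on $B_{\ell_\infty}$.

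For the first stage, equip $B_{\ell_\infty}$ with its weak$^*$-topology (the product topology on $[-1,1]^\N$), under which it is a compact Polish space; hypothesis (i) then says that each $\D_x$ is weak$^*$-closed. By Theorem \ref{ooiiou} applied to $\D$, the map $x \mapsto \D_x$ is Borel from $\B$ into the hyperspace $\cK(B_{\ell_\infty})$, so the Kuratowski--Ryll-Nardzewski selection theorem (\cite{Ke}, Theorem $12.13$) produces Borel uniformizations $h_n : \B \to B_{\ell_\infty}$ of $\D$ with $(h_n(x))_n$ weak$^*$-dense in $\D_x$ for every $x \in \B$.

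For the second stage, I first observe that the norm $(f,g) \mapsto \|f - g\|_\infty = \sup_n |f_n - g_n|$ is Borel on $B_{\ell_\infty} \times B_{\ell_\infty}$ (as a countable supremum of continuous coordinate functions), so any set defined by finitely many rational norm-inequalities is Borel in the weak$^*$-Borel structure. For each $m \in \N$, I would construct Borel uniformizations $(g^m_k)_{k \in \N}$ of $\D$ whose image $\{g^m_k(x) : k \in \N\}$ is a $1/m$-net in $(\D_x, \|\cdot\|_\infty)$; the doubly-indexed family $\{g^m_k : k, m \in \N\}$ then yields the desired norm-dense selection. These $g^m_k$ are built by iterated selection: set $g^m_1 = h_1$ and, having chosen $g^m_1, \ldots, g^m_k$, take $g^m_{k+1}$ to be a Borel uniformization of the Borel set
$$\D^m_{k+1} = \{(x, f) \in \D : \|f - g^m_j(x)\|_\infty \geq 1/m \text{ for all } j \leq k\},$$
extended by a default value (say $h_1(x)$) on the $x$-slices where $\D^m_{k+1,x}$ is empty. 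Hypothesis (ii) is then used to show that $\{g^m_k(x) : k\}$ is a maximal $1/m$-packing of $\D_x$, and hence a $1/m$-net: otherwise some $f \in \D_x$ would be norm-distance $\geq 1/m$ from every $g^m_k(x)$, contradicting maximality of the selection against the norm-separability of $\D_x$.

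The main obstacle lies precisely in the inductive selection step: because the norm on $B_{\ell_\infty}$ is only weak$^*$-lower-semicontinuous, the section $\D^m_{k+1,x}$ is Borel but in general not weak$^*$-closed, so Kuratowski--Ryll-Nardzewski does not apply to it directly. Overcoming this requires a more delicate selection argument tailored to the product structure of $B_{\ell_\infty}$ — for example, writing $\D^m_{k+1}$ as a countable increasing union of Borel sets whose sections are weak$^*$-compact (obtained by replacing the open condition $\|f - g^m_j(x)\|_\infty > 1/m - 1/\ell$ by the closed one $\|f - g^m_j(x)\|_\infty \geq 1/m$ and exhausting $\ell \to \infty$), selecting Borel uniformizations of each piece by Theorem \ref{ooiiou} plus Kuratowski--Ryll-Nardzewski, and assembling them by a diagonal enumeration. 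This is the core technical point carried out in Dodos' original argument in \cite{D2}.
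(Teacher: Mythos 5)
A preliminary remark: the paper offers no proof of Lemma \ref{dodoss} at all; it is imported verbatim from \cite{D2} as the main external technical input, so the only fair comparison is with Dodos' own argument. Your first stage is correct: with $B_{\ell_\infty}$ carried by its weak$^*$ topology the sections $\D_x$ are nonempty and compact, Theorem \ref{ooiiou} makes $x\mapsto\D_x$ Borel into $\cK(B_{\ell_\infty})$, and Kuratowski--Ryll-Nardzewski yields selectors $h_n$ with $(h_n(x))_n$ weak$^*$-dense in $\D_x$. Everything at issue is in the second stage, and it does not close.

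There are two genuine gaps. First, the selection from $\D^m_{k+1}$ is not merely ``delicate'': the repair you sketch cannot work, because a set of the form $\{f\in B_{\ell_\infty} : \|f-g\|_\infty\geq 1/m\}$ is in general not a countable union of weak$^*$-compact sets. Indeed, its trace on the weak$^*$-compact set $\{f : f_n\in\{0,\,1-1/n\}\}$ is the set of sequences with infinitely many nonzero entries, a $\Pi^0_2$-complete and hence non-$F_\sigma$ subset of a Cantor set. Ending with ``this is the core technical point carried out in Dodos' original argument'' concedes that the decisive step is being cited rather than proved. Second, even granting Borel uniformizations of each $\D^m_{k+1}$, the conclusion fails: an $\omega$-step greedy $1/m$-separated selection need not be maximal, and norm-separability does not force it to terminate or to exhaust. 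Take $\D_x=\{0\}\cup\{e_n : n\in\N\}\cup\{\mathbf{1}\}$, with $e_n$ the coordinate vectors and $\mathbf{1}=(1,1,\dots)$: this is weak$^*$-compact and countable (hence norm-separable), all pairwise norm-distances equal $1$, and the selectors may produce $0,e_1,e_2,\dots$ forever, leaving $\mathbf{1}$ at norm-distance $1$ from every chosen point for every $m$; no contradiction arises, so $(g^m_k(x))_{k,m}$ need not be norm-dense. Dodos' actual proof is of a different character: it exploits the fact that a weak$^*$-compact, norm-separable set is norm-fragmented, runs a transfinite slicing derivation on the sections, and uses a boundedness argument to make the construction uniformly Borel; the greedy scheme does not substitute for this.
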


Say $\B\subset\SD$ is Borel, and define $\D$ as in Dodos' coding above. As $\B\subset \SD$, we have that $\D_X$ is norm separable, for all $X\in \B$. Therefore, by the lemma above, there exists a sequence of Borel functions $g_n:\B\to B_{\ell_\infty}$ such that, for each $X\in \B$, the sequence $(g_n(X))_n$ is norm dense in $K_{X^*}$. By taking rational linear combinations of $(g_n)$, we can assume that $(g_n)$ are closed under rational linear combinations. This sequence will play the same role as the sequence of Kuratowski Ryll-Nardzewski's selectors $(d_n)_n$ did in Dodos' coding for $B_{X^*}$.

We saw that, for each $X\in \B$,  $X^*$ is isometric to $\text{span}\{K_{X^*}\}$. In order to simplify notation, set $[K_{X^*}]=\text{span}\{K_{X^*}\}$. With that in mind, for each $X\in\B$, we define a coding for $X^{**}$ as

$$L_{X^{**}}=\Big\{x^{**}\in B_{\ell_\infty}\ |\ \exists f\in B_{[K_{X^*}]^*}\forall n \ x^{**}_n=\frac{f(g_n(X))}{\|g_n(X)\|_\infty}\Big\}\subset B_{\ell_\infty},$$\hfill

\noindent where if $g_n(X)=0$, we let $x^{**}_n=0$ above. It is not hard to see  that the set $\mathbb{D}'\subset \SB\times B_{\ell_\infty}$ defined by 

$$(X,x^{**})\in \mathbb{D}' \Leftrightarrow  x^{**}\in L_{X^{**}}$$\hfill

\noindent is Borel. Indeed, 

\begin{align*}
(X,x^{**})\in \mathbb{D}' \Leftrightarrow &\forall n,m,\ell\in\N\ \forall p,q\in \Q\\
&pg_n(X)+qg_m(X)=g_\ell(X)\\
&\rightarrow px^{**}_n\|g_n(X)\|_\infty+ qx^{**}_m\|g_m(X)\|_\infty=x^{**}_\ell\|g_\ell(X)\|_\infty.
\end{align*}\hfill

Also, for a given $X\in \SB$, the natural map 

$$f\in B_{[K_{X^*}]^*}\mapsto \Big(\frac{f(g_n(X))}{\|g_n(X)\|_\infty}\Big)_n\in L_{X^{**}}$$ \hfill

\noindent is a surjective isometry. Indeed, if $x^{**}(1),...,x^{**}(k)\in L_{X^{**}}$ and $f_1,...,f_k$ are the corresponding elements of $B_{[K_{X^*}]^*}$, then, for every $a_1,...,a_k\in\R$, we have 

\begin{align*}
\big\|\sum_{i=1}^ka_if_i\big\|_{[K_{X^*}]^*}&=\sup\big\{\big|\sum_{i=1}^ka_i
\frac{f_i(g_n(X))}{\|g_n(X)\|_\infty}\big|\ |\ g_n(X)\neq 0\big\}\\
&=\sup\big\{\big|\sum_{i=1}^ka_ix^{**}_n(i)\big|\ |\ n\in\N\big\}=\big\|\sum_{i=1}^k a_ix^{**}(i)\|_\infty.
\end{align*}\hfill
 
We can now apply \text{Theorem \ref{ooiiou}} and get the following lemma, which is the first step to show that $L_{X^{**}}$ can be used as a (nice) coding for $X^{**}$. 
 
\begin{lemma}\label{dodosdual}
Say $\mathbb{B}\subset \SD$ is Borel. The map 

$$X\in\mathbb{B}\mapsto L_{X^{**}}\in\cK(B_{\ell_\infty})$$\hfill

\noindent is Borel. Moreover, for all $X\in \B$, there exists an onto isometry $j_X:L_{X^{**}}\to B_{[K_{X^*}]^*}$  such that, if $f=j_X(x^{**})$, then $x^{**}_n=f(g_n(X))/\|g_n(X)\|$, for all $n\in\N$ such that $g_n(X)\neq 0$, and $x^{**}_n=0$ otherwise.
\end{lemma}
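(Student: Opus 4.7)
The plan is to reduce the statement to \text{Theorem \ref{ooiiou}} applied to the Borel set $\D'\subset \mathbb{B}\times B_{\ell_\infty}$ already constructed above, and then to read off $j_X$ as the inverse of the evaluation map displayed in the preceding paragraph. Concretely, I would verify three items in order: compactness of the sections, applicability of \text{Theorem \ref{ooiiou}}, and the existence/formula of $j_X$.

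First, I would fix $X\in\mathbb{B}$ and check that $L_{X^{**}}$ is compact in $B_{\ell_\infty}$ endowed with its pointwise (i.e., weak$^*$) topology. The evaluation map
$$\Phi_X:f\in B_{[K_{X^*}]^*}\mapsto \Big(\tfrac{f(g_n(X))}{\|g_n(X)\|_\infty}\Big)_n\in B_{\ell_\infty}$$
is weak$^*$-to-product continuous, since each coordinate is evaluation at the fixed vector $g_n(X)$; its domain is weak$^*$-compact by Banach--Alaoglu, so $L_{X^{**}}=\Phi_X(B_{[K_{X^*}]^*})$ is compact. Combined with the Borelness of $\D'$ already established in the text via the explicit $\Q$-linearity condition, \text{Theorem \ref{ooiiou}} applies with $Y=B_{\ell_\infty}$ and delivers the first assertion: $X\mapsto L_{X^{**}}$ is Borel as a map $\mathbb{B}\to\cK(B_{\ell_\infty})$.

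For the isometry, I would simply set $j_X=\Phi_X^{-1}$. Surjectivity of $\Phi_X$ onto $L_{X^{**}}$ holds by definition of $L_{X^{**}}$. For injectivity, I would use that $(g_n(X))_n$ was chosen (via \text{Lemma \ref{dodoss}} and then closed under rational linear combinations) to be norm-dense in $K_{X^*}$; hence its $\Q$-linear span is norm-dense in $[K_{X^*}]$, and any two elements of $[K_{X^*}]^*$ agreeing at every $g_n(X)$ must coincide. The fact that $\Phi_X$ preserves norms is precisely the computation $\|\sum a_if_i\|_{[K_{X^*}]^*}=\|\sum a_ix^{**}(i)\|_\infty$ carried out just above the lemma, so $j_X$ is a surjective isometry onto $B_{[K_{X^*}]^*}$, and the formula relating $f=j_X(x^{**})$ to the coordinates of $x^{**}$ is immediate from the definition of $\Phi_X$ (with the convention $x^{**}_n=0$ when $g_n(X)=0$).

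I do not foresee any substantive obstacle. The main points — Borelness of $\D'$, norm-density of $(g_n(X))_n$ in $K_{X^*}$, and the isometry computation — are already in the text, and the only genuinely new input is the weak$^*$-compactness of each section, which is a direct consequence of Banach--Alaoglu plus continuity of coordinate evaluation. The lemma is therefore essentially a packaging argument that feeds the Borel relation $\D'$ into \text{Theorem \ref{ooiiou}}.
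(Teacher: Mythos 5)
Your proposal is correct and follows essentially the same route as the paper: the text immediately preceding the lemma establishes Borelness of $\D'$ and the surjective-isometry computation, and the paper then obtains the lemma by feeding $\D'$ into Theorem \ref{ooiiou} exactly as you do. Your added verification of weak$^*$-compactness of the sections via Banach--Alaoglu and your injectivity argument from norm-density of $(g_n(X))_n$ just make explicit what the paper leaves implicit.
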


Before we show how to interpret the elements in our coding for $X^*$ and $X^{**}$, let us prove another lemma which will be crucial in our proof. Many times in these notes we will be working with families of separable Banach spaces which are not in $\SB$. Lemma \ref{rosendal} is the tool that we will use in order to bring those families back to $\SB$. 

For any given non-empty compact metric space $M$, there exists a continuous surjection $h:\Delta\to M$ (see \cite{Ke}, \text{Theorem} $4.18$). The following lemmas allow us to choose (in a Borel manner) continuous surjections $h_K:\Delta \to K$, for all $K\in \cK(M)$. Similar calculations can be found in \cite{S}, Proposition 3.8, page 14, and Theorem 2.1, page 106.

\begin{lemma} Let $M$  be a metric space and $L$ be a compact metric space. Let  $h:L\to M$ be a continuous function. Then the map 

$$K\in \cK(M)\mapsto h^{-1}(K)\in\cK(L)$$\hfill

\noindent is Borel.
\end{lemma}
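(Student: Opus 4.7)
The plan is to verify Borelness by checking the map's preimages on a countable family of sets generating the Borel $\sigma$-algebra of $\cK(L)$. Recall that, since $L$ is a compact metric space, the Vietoris topology on $\cK(L)$ has as a subbasis the sets
\[
\{F\in\cK(L)\mid F\cap U\neq\emptyset\}\quad\text{and}\quad\{F\in\cK(L)\mid F\subset U\},
\]
with $U$ ranging over open subsets of $L$. Moreover, $\cK(L)$ is second countable, so it is enough to consider $U$ from a countable basis. Hence to prove that $\Phi:K\mapsto h^{-1}(K)$ is Borel, I only need to show that for every open $U\subset L$, both $\Phi^{-1}(\{F\mid F\cap U\neq\emptyset\})$ and $\Phi^{-1}(\{F\mid F\subset U\})$ are Borel subsets of $\cK(M)$.

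For the first type, I would rewrite $\Phi^{-1}(\{F\mid F\cap U\neq\emptyset\}) = \{K\in\cK(M)\mid h^{-1}(K)\cap U\neq\emptyset\} = \{K\in\cK(M)\mid K\cap h(U)\neq\emptyset\}$. Now $h(U)$ itself need not be open (or even Borel in general), but I can write $U=\bigcup_n C_n$ as an increasing countable union of compact subsets (since $L$ is compact metric, every open set is $F_\sigma$), whence $h(U)=\bigcup_n h(C_n)$ with each $h(C_n)\subset M$ compact. Thus
\[
\{K\in\cK(M)\mid K\cap h(U)\neq\emptyset\}=\bigcup_{n}\{K\in\cK(M)\mid K\cap h(C_n)\neq\emptyset\},
\]
and each set on the right is closed in the Vietoris topology of $\cK(M)$ (the set $\{K\mid K\cap C\neq\emptyset\}$ is closed whenever $C$ is closed). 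So this preimage is $F_\sigma$, hence Borel.

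For the second type, I would use $h^{-1}(K)\subset U$ iff $h(L\setminus U)\cap K=\emptyset$. Since $L\setminus U$ is closed in the compact space $L$, it is compact, and therefore $h(L\setminus U)\subset M$ is compact as well. Hence
\[
\Phi^{-1}(\{F\mid F\subset U\})=\{K\in\cK(M)\mid K\cap h(L\setminus U)=\emptyset\},
\]
which is open in $\cK(M)$ (the complement of a basic closed set), in particular Borel.

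The only mildly delicate step is the first one, where $h(U)$ fails to be open; the trick is to decompose $U$ into a countable union of compact pieces and exploit that continuous images of compacts are compact. Once that is handled, the second case is immediate from compactness of $L\setminus U$ and continuity of $h$, and the countable generation of the Vietoris Borel structure closes the argument.
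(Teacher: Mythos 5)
Your proof is correct, and its core step coincides with the paper's: you write the open set $U\subset L$ as a countable union of compact sets $C_n$, note that each $h(C_n)$ is compact, and reduce $\{K\mid h^{-1}(K)\cap U\neq\emptyset\}=\{K\mid K\cap h(U)\neq\emptyset\}$ to the countable union $\bigcup_n\{K\mid K\cap h(C_n)\neq\emptyset\}$; the paper does exactly this, with closed sets $F_n$ in place of your compact $C_n$ (the same thing inside the compact space $L$). You differ in two respects. First, the paper invokes the fact that the ``hit'' sets $\{F\in\cK(L)\mid F\cap U\neq\emptyset\}$ alone generate the Borel structure of $\cK(L)$ (citing \cite{D1}, Proposition 1.4), so it only has to treat that one family; you instead work with the full Vietoris subbasis and must also handle $\{F\mid F\subset U\}$, which you do correctly via the equivalence $h^{-1}(K)\subset U \Leftrightarrow K\cap h(L\setminus U)=\emptyset$, the latter defining a subbasic open set of $\cK(M)$ since $h(L\setminus U)$ is compact. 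This costs one extra, easy case but makes the argument self-contained. Second, where the paper proves its Claim --- that $\{K\in\cK(M)\mid K\cap F\neq\emptyset\}$ is Borel for closed $F\subset M$ --- by writing it as the countable intersection $\bigcap_m\{K\mid K\cap V_m\neq\emptyset\}$ over shrinking open neighborhoods $V_m$ of $F$ and using compactness of $K$, you simply observe that its complement $\{K\mid K\subset M\setminus F\}$ is Vietoris open, so the set is in fact closed. That is a cleaner and slightly stronger observation which renders the paper's Claim unnecessary.
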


\begin{proof}
Let $U\subset L$ be an open set. We only need to show that $\{K\in\cK(M)\mid  h^{-1}(K)\cap U\neq\emptyset\}$ is Borel (see \cite{D1}, proposition $1.4$).  We first prove the following claim.\\

\textbf{Claim:} Say $F\subset M$ is closed. Then  $\{K\in\cK(M)\mid K\cap F\neq\emptyset \}$ is Borel.\\

Say $d$ is the metric of $M$, and write $F=\cap_m V_m$, where each $V_m$ is open, and $d(x,F)<1/m$, for all $x\in V_m$. Notice that

$$\{K\in\cK(M)\mid  K\cap F\neq\emptyset\}=\cap_m\{K\in\cK(M)\mid  K\cap V_m\neq\emptyset\}.$$\hfill

Indeed, if $K\cap F\neq\emptyset$, it is clear that $K\cap V_m\neq\emptyset$, for all $m\in\N$. Say $K\cap V_m\neq\emptyset$, for all $m\in\N$, and pick $v_m\in K\cap V_m$, for each $m\in\N$. As $K$ is compact, by taking a subsequence, we can assume $v_m\to v$, for some $v\in K$. Also, as $d(v_m,F)<1/m$, for all $m\in \N$, we have that $v\in F$. Hence, $K\cap F\neq\emptyset$, and the claim is done.

Let us now finish the proof of the lemma. As $L$ is a metric space and $U$ is open, we can write $U=\cup_n F_n$, where $F_n$ is closed, for all $n\in\N$. Also, as $L$ is compact and $h$ is continuous, we have that $h(F_n)$ is closed, for all $n\in\N$. Hence, as we have

\begin{align*}
\{K\in\cK(M)\mid  h^{-1}(K)\cap U\neq\emptyset\}&=\{K\in\cK(M)\mid  K\cap h(U)\neq\emptyset\}\\
&=\cup_n\{K\in\cK(M)\mid  K\cap h(F_n)\neq\emptyset\},
\end{align*}\hfill

\noindent we are done.
\end{proof}

\begin{lemma} \label{rosendal} 
Let $\Delta$ be the Cantor set. There exists a Borel function 

$$Q:\cK(\Delta)\to C(\Delta, \Delta)$$ \hfill

\noindent such that, for each $K\in \cK(\Delta)$, $Q(K):\Delta\to \Delta$ is a continuous function onto $K$. Therefore, if $M$ is a compact metric space, and $h:\Delta \to M$ is a continuous surjection, we have that

$$H:K\in\cK(M)\mapsto  h\circ Q(h^{-1}(K))\in C(\Delta, M),$$\hfill

\noindent   is a Borel function and, for each $K\in \cK(M)$, $H(K):\Delta\to M$ is a continuous function onto $K$.
\end{lemma}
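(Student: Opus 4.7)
The plan is to construct $Q$ explicitly via the tree representation of compact subsets of $\Delta$, verify its Borel measurability by checking preimages of sub-basic clopen sets in $C(\Delta,\Delta)$, and finally derive the statement about $H$ by composing with the preceding lemma on $K \mapsto h^{-1}(K)$.

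Identify $\Delta = 2^\N$. For each nonempty $K \in \cK(\Delta)$, associate the pruned tree $T_K = \{s \in 2^{<\N} : [s] \cap K \neq \emptyset\}$, so that $K = [T_K]$. Define $Q(K) : \Delta \to \Delta$ recursively: given $\sigma \in \Delta$, set $\tau = Q(K)(\sigma)$, where, assuming inductively that $\tau \upharpoonright n \in T_K$,
$$\tau(n) = \begin{cases} \sigma(n) & \text{if } (\tau \upharpoonright n)^\frown \sigma(n) \in T_K, \\ 1 - \sigma(n) & \text{otherwise}. \end{cases}$$
Since $T_K$ is pruned, $(\tau\upharpoonright n)^\frown \tau(n) \in T_K$ at every step, so $\tau \in [T_K] = K$. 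The map $Q(K)$ is $1$-Lipschitz in the ultrametric on $\Delta$, because $\tau\upharpoonright n$ depends only on $\sigma\upharpoonright n$; it is also surjective, because for $\tau \in K$ the first clause of the recursion always applies and hence $Q(K)(\tau) = \tau$.

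Recall that $C(\Delta, \Delta)$ with the uniform topology has a sub-basis of clopen sets $V_{s,t} = \{f \in C(\Delta, \Delta) : f([s]) \subset [t]\}$, with $s, t \in 2^{<\N}$. For $\sigma \in [s]$, the finite word $Q(K)(\sigma)\upharpoonright |t|$ depends only on $\sigma\upharpoonright |t|$ and on which nodes of length at most $|t|$ belong to $T_K$; moreover $u \in T_K$ is equivalent to $K \cap [u] \neq \emptyset$, a basic open condition in the Effros-Borel structure of $\cK(\Delta)$. Letting $N = \max(|s|,|t|)$ and noting that $Q(K)(\sigma)\upharpoonright |t|$ is constant on each cylinder $[s']$ with $|s'|=N$, one obtains
$$\{K : Q(K) \in V_{s,t}\} = \bigcap_{\substack{s' \succeq s \\ |s'|=N}} \{K : Q(K)({s'}^\frown 0^\omega)\upharpoonright |t| = t\},$$
a finite Boolean combination of sets of the form $\{K : K \cap [u] \neq \emptyset\}$, and hence Borel. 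Thus $Q$ is Borel.

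For the second assertion, the preceding lemma ensures that $K \mapsto h^{-1}(K)$ is Borel from $\cK(M)$ into $\cK(\Delta)$, so $K \mapsto Q(h^{-1}(K))$ is Borel into $C(\Delta, \Delta)$; post-composition by the fixed continuous $h$ is a continuous operation $C(\Delta, \Delta) \to C(\Delta, M)$, whence $H(K) = h \circ Q(h^{-1}(K))$ is Borel in $K$. Surjectivity of $H(K)$ onto $K$ follows from that of $Q(h^{-1}(K))$ onto $h^{-1}(K)$ together with surjectivity of $h$ onto $M$, which ensures every point of $K$ has a preimage in $h^{-1}(K)$. The main obstacle is to upgrade the pointwise Borel dependence of $Q(K)(\sigma)$ on $K$ (immediate from the recursion) to Borel dependence in the \emph{uniform} topology on $C(\Delta, \Delta)$; this is handled by using compactness of each cylinder $[s]$ to collapse the sub-basic condition $V_{s,t}$ to a finite conjunction of cell-by-cell conditions on $T_K$.
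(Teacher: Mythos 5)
Your proof is correct. It follows the same overall strategy as the paper's proof --- construct an explicit continuous surjection $\Delta\to K$ from the tree of cylinders meeting $K$, reduce Borelness of $K\mapsto Q(K)$ to the generating conditions $K\cap\Delta_u\neq\emptyset$, and obtain $H$ by composing with the preceding lemma on $K\mapsto h^{-1}(K)$ and with postcomposition by $h$ --- but the constructions differ in a way worth noting. The paper's $Q(K)$ fixes $K$ pointwise and sends $\sigma\notin K$ to the lexicographic minimum of $\Delta_{\sigma_{|n(K,\sigma)}}\cap K$, where $n(K,\sigma)$ is the deepest level at which $\sigma$'s cylinder still meets $K$; continuity then requires a separate argument, and the Borelness check is run against uniform balls via a formula quantifying over $n(K,\sigma)$ and lexicographic predecessors. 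Your greedy bit-by-bit recursion through the pruned tree $T_K$ produces a (generally different) surjection that is automatically $1$-Lipschitz, and --- more importantly --- has the locality property that $Q(K)(\sigma)\upharpoonright m$ depends only on $\sigma\upharpoonright m$ and on $T_K$ up to level $m$. That locality is exactly what collapses each sub-basic clopen condition $V_{s,t}$ to a finite Boolean combination of the sets $\{K:\ K\cap\Delta_u\neq\emptyset\}$, so your Borelness verification is more finitary and arguably cleaner than the paper's. (Both arguments tacitly assume $K\neq\emptyset$, as they must since no continuous image of $\Delta$ is empty; the lemma is only ever applied to nonempty compacta, so this is a shared and harmless convention.) Your treatment of the second assertion --- Borelness of $K\mapsto h^{-1}(K)$, continuity of $f\mapsto h\circ f$, and $h(h^{-1}(K))=K$ from surjectivity of $h$ --- matches the paper's.
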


\begin{proof}
The second part of the lemma follows from the first part and the lemma above. Let us prove the first part. For each $s\in 2^{<\N}$, we let $\Delta_s=\{\sigma\in \Delta\mid s\preceq \sigma\}$. 

For each $K\in \cK(\Delta)$, we define $Q(K):\Delta\to \Delta$ as follows. If $\sigma\in K$, let $Q(K)(\sigma)=\sigma$. If $\sigma \not\in K$, let $n(K,\sigma)=\max\{n\in\N\ |\ \Delta_{\sigma_{|n}}\cap K\neq \emptyset\}$, and set 

$$Q(K)(\sigma)=\min\Delta_{\sigma_{|n(K,\sigma)}}\cap K,$$\hfill
 
\noindent where the minimum above is taken under the lexicographical order $\leqslant_{lex}$. It is easy to see that $Q(K)\in C(\Delta,\Delta)$. Let us show that $K\mapsto Q(K)$ is Borel. 

Say $g\in C(\Delta,\Delta)$,  $\delta>0$, and let $d_\Delta$ be the usual metric of $\Delta$. We need to show that $\{K\in\cK(\Delta)\ |\   \sup_{\sigma\in \Delta} d_\Delta(Q(K)(\sigma),g(\sigma))<\delta\}$ is Borel. Say $n\in\N$ and $\sigma\in \Delta$, then

\begin{align*}
\{K\in\cK(\Delta)\ |\ n(K,\sigma)=n\}=&\ \{K\in\cK(\Delta)\ |\ K\cap\Delta_{\sigma_{|n}}\neq\emptyset\}\\
&\cap\{K\in\cK(\Delta)\ |\ K\cap\Delta_{\sigma_{|n+1}}=\emptyset\}
\end{align*}\hfill

\noindent is Borel. Therefore, if $G\subset \Delta$ is a countable dense set, 

$$\{K\in\cK(\Delta)\ |\ \sup_{\sigma\in \Delta} d_\Delta(Q(K)(\sigma),g(\sigma))<\delta\}=W\cap P,$$\hfill

\noindent where 

\begin{align*}
W=\{K\in\cK(\Delta)\ |\ \exists \eps\forall \sigma\in G(\forall n\ n(K,\sigma)\neq n)\rightarrow d_\Delta(\sigma,g(\sigma))<\delta-\eps\}
\end{align*}\hfill

\noindent is Borel, and

\begin{align*}
P=\Big\{K\in\cK(\Delta)\ |\ \exists &\eps\forall \sigma\in G\ \Big(\exists n\ n(K,\sigma)= n\Big)\\
&\rightarrow \Big(\exists s\in 2^{<\N}(\sigma_{|n}\preceq s)\  \forall \sigma_{| n}\preceq s'<_{lex}s
 \ \forall \tilde{\sigma}\in G\cap \Delta_s \ \\
 & \ \ \ \ \ \   \Delta_s\cap K\neq \emptyset\wedge \Delta_{s'}\cap K=\emptyset
 \wedge d_\Delta(\tilde{\sigma},g(\sigma))<\delta-\eps\Big)\Big\}.
\end{align*}\hfill

\noindent So, $P$ is Borel, and we are done.
\end{proof}

We now show a couple of lemmas that will allow us to interpret $K_{X^*}$ and $L_{X^{**}}$ as $X^*$ and $X^{**}$, i.e., the lemmas will tell us how the functional evaluation will work if $x\in X$, $x^*\in K_{X^*}$, and $x^{**}\in L_{X^{**}}$.

\begin{lemma}\label{dual}
 For each $X\in\SB$, let $i_X$ be as in \text{Lemma \ref{dodos}}. Let $\A=\{(X,x,x^*)\in \SB\times C(\Delta)\times B_{\ell_\infty}\ |\  x\in X, x^*\in K_{X^*}\}$, and let $\alpha: \A\to \R$ be defined as 

$$\alpha(X,x,x^*)=\langle i_X(x^*),x\rangle,$$\hfill

\noindent for each $(X,x,x^*)\in \A$. Then, $\A$ is Borel, and $\alpha$ is  a Borel map.
\end{lemma}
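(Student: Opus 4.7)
The plan is to check that $\A$ is Borel by routine arguments, and then to express $\alpha$ as a pointwise limit of Borel functions built from Dodos' coding and the approximating sequence $(d_n(X))$.

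For the Borelness of $\A$, I would write $\A$ as the intersection of $\{(X,x,x^*)\in\SB\times C(\Delta)\times B_{\ell_\infty} : x\in X\}$ with $\{(X,x,x^*)\in\SB\times C(\Delta)\times B_{\ell_\infty} : x^*\in K_{X^*}\}$. Using the Kuratowski--Ryll-Nardzewski selectors, $x\in X$ is equivalent to $\inf_n\|d_n(X)-x\|=0$, which is Borel in $(X,x)$, and the second set is Borel by the Borelness of Dodos' coding $\mathbb{D}$ already noted in the discussion preceding Lemma \ref{dodos}.

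For the Borelness of $\alpha$, the defining equation of $K_{X^*}$ together with the description of $i_X$ in Lemma \ref{dodos} gives, for every $X\in\SB$, every $x^*\in K_{X^*}$, and every $n\in\N$,
$$\langle i_X(x^*),d_n(X)\rangle = x^*_n\,\|d_n(X)\|,$$
with both sides interpreted as $0$ when $d_n(X)=0$. Since $x^*\mapsto x^*_n$ is a coordinate projection and $X\mapsto\|d_n(X)\|$ is Borel (because $X\mapsto d_n(X)$ is Borel into $C(\Delta)$ and the norm is continuous), the map $(X,x^*)\mapsto \langle i_X(x^*),d_n(X)\rangle$ is Borel for each fixed $n$.

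To pass from the dense skeleton $\{d_n(X):n\in\N\}$ to an arbitrary $x\in X$, I would define, on $\{(X,x) : x\in X\}$, the Borel selector
$$n_k(X,x) = \min\{n\in\N : \|d_n(X)-x\| < 2^{-k}\},$$
which is well defined because $(d_n(X))_n$ is dense in $X$, and Borel because it is a countable minimization over Borel conditions. Since $i_X(x^*)\in B_{X^*}$ has norm at most $1$, one has $|\langle i_X(x^*), d_{n_k(X,x)}(X) - x\rangle| \le 2^{-k}$, so
$$\alpha(X,x,x^*) = \lim_{k\to\infty} \langle i_X(x^*), d_{n_k(X,x)}(X)\rangle.$$
This displays $\alpha$ as a pointwise limit of Borel functions on $\A$, hence $\alpha$ is Borel. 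The only step that demands a little care is the Borelness of the selector $n_k$, but this is purely a matter of countable combinatorics over Borel predicates; no deeper obstacle arises, as the hard content (the concrete realization of $i_X$ on the dense sequence) is already packaged in Lemma \ref{dodos}.
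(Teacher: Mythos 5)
Your proposal is correct and follows essentially the same route as the paper: both rest on the identity $\langle i_X(x^*),d_n(X)\rangle=x^*_n\|d_n(X)\|$ and on approximating $x$ by the dense sequence $(d_n(X))_n$. The only cosmetic difference is that the paper concludes Borelness of $\alpha$ by writing the preimage of an interval $(a,b)$ as an explicit countable quantifier formula, whereas you package the same computation as a pointwise limit of Borel functions via the Borel selectors $n_k(X,x)$; both are valid and your bound $|\langle i_X(x^*),d_{n_k(X,x)}(X)-x\rangle|\leqslant 2^{-k}$ checks out since $\|i_X(x^*)\|\leqslant 1$.
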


\begin{proof}
As $X\mapsto K_{X^*}$ is Borel, it is clear that $\A$ is Borel. Pick $(X,x,x^*)\in \A$, and let $(n_j)\in [\N]^\N$ be such that $d_{n_j}(X)\to x$. Then $\alpha(X,x,x^*)=\lim x^*_{n_j}\|d_{n_j}(X)\|$. Indeed, as $d_{n_j}(X)\to x$, we have $\langle i_X(x^*), d_{n_j}(X)\rangle\to \langle i_X(x^*),x\rangle$. Hence, as  $\langle i_X(x^*), d_{n_j}(X)\rangle=x^*_{n_j}\|d_{n_j}(X)\|$, we have $ x^*_{n_j}\| d_{n_j}(X)\|\to \alpha(X,x,x^*)$.

To see that $\alpha$ is Borel notice that, given $a<b\in \R$, we have 

\begin{align*}
\{(X,x,&x^*)\in \A\ |\ \alpha(X,x,x^*)\in (a,b)\}=\\
&\{(X,x,x^*)\in \A\mid \exists \delta\forall\eps\exists n \|d_n(X)-x\|<\eps, x^*_n\| d_{n}(X)\|\in (a+\delta,b-\delta)\}.
\end{align*}
\end{proof}

Notice that, we have finally obtained  \text{Theorem \ref{ahaiohoi}}, which is the first ingredient for \text{Theorem \ref{f}}. Indeed, \text{Theorem \ref{ahaiohoi}} is a simple consequence of \text{Lemma \ref{dodos}} and \text{Lemma \ref{dual}}.

\begin{lemma}\label{dual2}
Say $\B\subset \SD$ is Borel, and let $j_X$ be as in \text{Lemma \ref{dodosdual}}. Let $\F=\{(X,x^*,x^{**})\in \B\times B_{\ell_\infty}\times B_{\ell_\infty}\ |\  x^*\in K_{X^*}, x^{**}\in L_{X^{**}}\}$, and let $\beta: \F\to \R$ be defined as 

$$\beta(X,x^*,x^{**})=\langle j_X(x^{**}),x^*\rangle,$$\hfill

\noindent for each $(X,x^*,x^{**})\in \F$. Then, $\F$ is Borel, and $\beta$ is  a Borel map.
\end{lemma}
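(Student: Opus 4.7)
The plan is to mirror the structure of \text{Lemma \ref{dual}}, using the sequence $(g_n)$ of Borel uniformizations from \text{Lemma \ref{dodoss}} in place of the Kuratowski--Ryll-Nardzewski selectors $(d_n)$. First, Borelness of $\F$ is immediate: by \text{Lemma \ref{dodos}} the assignment $X \mapsto K_{X^*}$ is Borel as a map $\SB \to \cK(B_{\ell_\infty})$, and by \text{Lemma \ref{dodosdual}} the assignment $X \mapsto L_{X^{**}}$ is Borel as a map $\B \to \cK(B_{\ell_\infty})$. Combined with the Borelness of the relation ``$y \in K$'' on $B_{\ell_\infty} \times \cK(B_{\ell_\infty})$ (a standard fact, see \cite{Ke}), the defining condition of $\F$ is a Boolean combination of Borel conditions.

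For Borelness of $\beta$, the key observation is that, for each $X \in \B$, the family $(g_n(X))_n$ is \emph{norm} dense in $K_{X^*}$. Hence, given $(X, x^*, x^{**}) \in \F$, pick $(n_j) \in [\N]^\N$ with $g_{n_j}(X) \to x^*$ in the norm of $\ell_\infty$. Since $j_X(x^{**}) \in B_{[K_{X^*}]^*}$ is a bounded linear functional on $[K_{X^*}]$, it is norm continuous, so
$$\beta(X, x^*, x^{**}) = \langle j_X(x^{**}), x^* \rangle = \lim_j \langle j_X(x^{**}), g_{n_j}(X) \rangle.$$
By the very definition of $L_{X^{**}}$ and $j_X$ (via \text{Lemma \ref{dodosdual}}), $\langle j_X(x^{**}), g_{n_j}(X) \rangle = x^{**}_{n_j}\, \|g_{n_j}(X)\|_\infty$ whenever $g_{n_j}(X) \neq 0$. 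Therefore $\beta(X, x^*, x^{**}) = \lim_j x^{**}_{n_j} \|g_{n_j}(X)\|_\infty$.

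With this formula in hand, I would express the preimage of an open interval under $\beta$ in Borel form exactly as in \text{Lemma \ref{dual}}: for $a < b \in \R$,
\begin{align*}
\{(X, x^*, x^{**}) \in \F \mid &\ \beta(X, x^*, x^{**}) \in (a, b)\} = \{(X, x^*, x^{**}) \in \F \mid \\
&\ \exists \delta\, \forall \eps\, \exists n\ \|g_n(X) - x^*\|_\infty < \eps \wedge x^{**}_n \|g_n(X)\|_\infty \in (a + \delta, b - \delta)\}.
\end{align*}
Since $X \mapsto g_n(X)$ is Borel for each $n$, this set is Borel, completing the proof.

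The only point needing any care is that $(g_n(X))_n$ must be norm dense (not merely weak$^*$-dense) in $K_{X^*}$, so that norm continuity of $j_X(x^{**})$ transfers the limit; this is precisely what \text{Lemma \ref{dodoss}} provides, and is why the argument goes through without any genuine obstacle.
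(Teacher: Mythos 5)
Your proposal is correct and follows essentially the same route as the paper: the paper's proof of this lemma consists precisely of observing that $\F$ is Borel because $X\mapsto K_{X^*}$ and $X\mapsto L_{X^{**}}$ are Borel, and then repeating the proof of Lemma \ref{dual} with $(d_n)$ replaced by $(g_n)$ and $i_X$ by $j_X$. Your added remark that norm density of $(g_n(X))_n$ in $K_{X^*}$ is what lets the limit pass through the norm-continuous functional $j_X(x^{**})$ is exactly the point that makes the substitution legitimate.
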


\begin{proof}
As $X\mapsto K_{X^*}$, and $X\mapsto L_{X^{**}}$ are Borel, it is clear that $\F$ is Borel. If, in the proof of \text{Lemma \ref{dual}}, we substitute the sequence $(d_n)$ by the sequence $(g_n)$ given by \text{Theorem \ref{dodoss}}, and we substitute $i_X$ by $j_X$, the rest of the proof follows exactly as in the proof of \text{Lemma \ref{dual}}. 
\end{proof}

Notice that, as  $B_{\ell_\infty}$ is a non-empty compact metric space,  \text{Lemma \ref{rosendal}} gives us a Borel map $H:\cK(B_{\ell_\infty})\to C(\Delta,B_{\ell_\infty})$ such that, for all $K\in \cK(B_{\ell_\infty})$, $H(K):\Delta\to B_{\ell_\infty}$ is continuous and onto $K$. We have the following easy application of \text{Lemma \ref{rosendal}}, and \text{Lemma \ref{dual2}}.

\begin{cor}\label{previousc}
Let $\B\subset \SD$ be Borel. Let $H$ be as above, and $\beta$ as in \text{Lemma \ref{dual2}}. Set $\E=\{(X,x^*,y)\in \B\times B_{\ell_\infty}\times \Delta\ |\ x^*\in K_{X^*}\}$, and define $\gamma:\E\to \R$ as 

$$\gamma(X,x^*,y)=\beta(X,x^*,H(L_{X^{**}})(y)),$$\hfill

\noindent for each $(X,x^*,y)\in \E$. Then, $\E$ is Borel, and $\gamma$ is a Borel map.
\end{cor}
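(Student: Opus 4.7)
The plan is to unwind $\gamma$ as a composition of maps, each of which is either Borel by a previously established lemma or continuous.

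First, for the Borelness of $\E$: writing $\mathbb{D}$ for the Borel set introduced before Lemma \ref{dodos}, we have that $\E$ is, up to a permutation of coordinates, the product $(\mathbb{D}\cap(\B\times B_{\ell_\infty}))\times \Delta$. Since $\B$ and $\mathbb{D}$ are Borel, so is $\E$.

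For the Borelness of $\gamma$, I would factor the intermediate map $(X,x^*,y)\mapsto (X,x^*,H(L_{X^{**}})(y))$ step by step. By \text{Lemma \ref{dodosdual}}, the map $X\in\B\mapsto L_{X^{**}}\in\cK(B_{\ell_\infty})$ is Borel, and by \text{Lemma \ref{rosendal}}, the map $K\in \cK(B_{\ell_\infty})\mapsto H(K)\in C(\Delta, B_{\ell_\infty})$ is Borel. Composing these, the assignment $X\in\B\mapsto H(L_{X^{**}})\in C(\Delta,B_{\ell_\infty})$ (with the uniform topology, which makes $C(\Delta,B_{\ell_\infty})$ Polish) is Borel. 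The evaluation map $\text{ev}:C(\Delta,B_{\ell_\infty})\times \Delta\to B_{\ell_\infty}$, $\text{ev}(f,y)=f(y)$, is continuous. Hence $(X,y)\mapsto H(L_{X^{**}})(y)$ is Borel as a composition of Borel and continuous maps, and therefore so is the product map $(X,x^*,y)\mapsto (X,x^*,H(L_{X^{**}})(y))$.

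By construction, $H(L_{X^{**}})(y)\in L_{X^{**}}$, so whenever $(X,x^*,y)\in \E$, the triple $(X,x^*,H(L_{X^{**}})(y))$ lies in $\F$. Composing with the Borel map $\beta:\F\to \R$ from \text{Lemma \ref{dual2}} then yields the Borelness of $\gamma$. No serious obstacle is expected; the only point requiring care is verifying that evaluation on $C(\Delta,B_{\ell_\infty})$ is jointly continuous, so that the composition of Borel assignments through it remains Borel, which is standard for the uniform topology on maps out of a compact space.
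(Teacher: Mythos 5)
Your proposal is correct and follows exactly the route the paper intends: the paper states Corollary \ref{previousc} as an ``easy application'' of \text{Lemma \ref{rosendal}} and \text{Lemma \ref{dual2}} without writing out details, and your argument---Borelness of $\E$ via the Borel set $\mathbb{D}$, and Borelness of $\gamma$ by composing $X\mapsto L_{X^{**}}$ (\text{Lemma \ref{dodosdual}}), $H$ (\text{Lemma \ref{rosendal}}), the jointly continuous evaluation on $C(\Delta,B_{\ell_\infty})\times\Delta$, and $\beta$ (\text{Lemma \ref{dual2}})---is precisely that application spelled out. No gaps.
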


The following corollary is just a consequence of the previous lemmas.

\begin{cor}\label{ooo}
Assume we are in the same setting as in Corollary \ref{previousc}. Then, for all $X\in \B$, and for all $x^*\in K_{X^*}$, $\gamma(X,x^*,\cdot):\Delta\to \R$ is a continuous function, and

$$sup_{y\in\Delta}\gamma(X,x^*,y)=\|x^*\|_\infty=\|i_X(x^*)\|_{X^*}.$$\hfill
\end{cor}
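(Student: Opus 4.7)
The plan is to unwind the definitions: for each $(X,x^*,y)\in\E$ we have
$$\gamma(X,x^*,y)=\langle j_X(H(L_{X^{**}})(y)),x^*\rangle,$$
where $H(L_{X^{**}}):\Delta\to L_{X^{**}}\subset B_{\ell_\infty}$ is the continuous surjection produced by \text{Lemma \ref{rosendal}} (with $B_{\ell_\infty}$ carrying the compact, pointwise/weak$^*$ topology) and $j_X:L_{X^{**}}\to B_{[K_{X^*}]^*}$ is the surjective isometry from \text{Lemma \ref{dodosdual}}. The two assertions will then be handled separately: continuity of the section $y\mapsto\gamma(X,x^*,y)$, and identification of its supremum with $\|x^*\|_\infty$.

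For continuity, I would fix $X\in\B$ and $x^*\in K_{X^*}$, and observe that it suffices to show that the map $\Phi_{x^*}:x^{**}\in L_{X^{**}}\mapsto\langle j_X(x^{**}),x^*\rangle\in\R$ is continuous (with respect to the pointwise topology on $L_{X^{**}}$), since $y\mapsto H(L_{X^{**}})(y)$ is continuous. When $x^*=g_n(X)$ for some $n$, the defining property of $j_X$ gives $\Phi_{g_n(X)}(x^{**})=x^{**}_n\|g_n(X)\|_\infty$, which is a continuous coordinate evaluation. For general $x^*\in K_{X^*}$, the sequence $(g_n(X))_n$ is norm dense in $K_{X^*}$, so I would pick $g_{n_j}(X)\to x^*$ in $\|\cdot\|_\infty$ and note
$$|\Phi_{x^*}(x^{**})-\Phi_{g_{n_j}(X)}(x^{**})|\leqslant \|j_X(x^{**})\|\cdot \|x^*-g_{n_j}(X)\|_\infty\leqslant \|x^*-g_{n_j}(X)\|_\infty,$$
uniformly in $x^{**}\in L_{X^{**}}$. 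Hence $\Phi_{x^*}$ is a uniform limit of continuous functions on $L_{X^{**}}$, so is continuous; composing with $H(L_{X^{**}})$ yields continuity on $\Delta$.

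For the supremum, I would use that $H(L_{X^{**}})(\Delta)=L_{X^{**}}$ together with the surjectivity and isometric nature of $j_X$:
$$\sup_{y\in\Delta}\gamma(X,x^*,y)=\sup_{x^{**}\in L_{X^{**}}}\langle j_X(x^{**}),x^*\rangle=\sup_{f\in B_{[K_{X^*}]^*}}\langle f,x^*\rangle.$$
Since $B_{[K_{X^*}]^*}$ is symmetric, the last supremum equals the dual norm of $x^*$ viewed as an element of $[K_{X^*}]\subset\ell_\infty$, namely $\|x^*\|_\infty$. Finally, the linear isometry extending $i_X:K_{X^*}\to B_{X^*}$ between $[K_{X^*}]$ and $X^*$ (\text{Lemma \ref{dodos}}) gives $\|x^*\|_\infty=\|i_X(x^*)\|_{X^*}$, completing the identification. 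There is no real obstacle here; the only point requiring care is the topology on $L_{X^{**}}$ in which $H(L_{X^{**}})$ is continuous, which is why the norm-density of $(g_n(X))_n$ in $K_{X^*}$ must be invoked to upgrade pointwise continuity to continuity of $\Phi_{x^*}$ for arbitrary $x^*\in K_{X^*}$.
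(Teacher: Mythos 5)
Your argument is correct and follows the same route the paper intends: the paper gives no formal proof of this corollary, only the remark that the first equality comes from $H(L_{X^{**}})$ being onto $L_{X^{**}}\equiv B_{[K_{X^*}]^*}$, which is exactly your chain $\sup_{y}\gamma=\sup_{x^{**}\in L_{X^{**}}}\langle j_X(x^{**}),x^*\rangle=\sup_{f\in B_{[K_{X^*}]^*}}\langle f,x^*\rangle=\|x^*\|_\infty=\|i_X(x^*)\|_{X^*}$. Your continuity argument (coordinate evaluations for $x^*=g_n(X)$, upgraded to arbitrary $x^*\in K_{X^*}$ by norm-density and the uniform bound $\|j_X(x^{**})\|\leqslant 1$) correctly supplies the one detail the paper leaves implicit, namely that the pairing is continuous for the product topology on $L_{X^{**}}$ in which $H(L_{X^{**}})$ is continuous.
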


The first equality in the lemma above follows from the fact that $H(L_{X^{**}}):\Delta\to B_{\ell_\infty}$ is a function \emph{onto} $L_{X^{**}}\equiv B_{X^{**}}$.

We are now ready to prove  \text{Theorem \ref{f}}, the duality theorem.  In the same fashion as in the usual proof that every separable Banach space $X$ embeds into $C(\Delta)$ (see \cite{Ke}, page $79$), we will now use the function $H$ to show that we can embed (in a Borel manner) the duals of all spaces of $X\in\B$ into $C(\Delta)$.

\begin{proof}\textbf{(of theorem \ref{f})} 
Let $\alpha$, $H$, and $\gamma$ be as in Lemma \ref{dual}, and Corollary \ref{previousc}. For each $X\in \B$, let 

$$X^\bullet=\{g\in C(\Delta)\ |\ \exists x^*\in K_{X^*}\exists\lambda\in \R \forall y\in \Delta\ \  g(y)=\lambda \gamma(X,x^*,y)\}.$$\hfill

 Let us show that the  assignment $X\mapsto X^\bullet$ is Borel. For this let $(g_n)$ be given by \text{Theorem \ref{dodoss}}, so $(g_n(X))_n$ is \emph{norm} dense in $K_{X^*}$, for all $X\in\B$.  Let $U(g,\delta)\subset C(\Delta)$ be the $\delta$-ball centered at $g$, i.e., 
 
 $$U(g, \delta)=\{f\in C(\Delta)\ |\ \exists \eps\forall y\in\Delta\ d_\Delta(f(y),g(y))<\delta-\eps\},$$\hfill
 
 \noindent  where $g\in C(\Delta)$, $\delta>0$, and $d_\Delta$ is the standard metric on $\Delta$. Let $G\subset\Delta$ be a countable dense set. We have

\begin{align*}
\{X\in \B\ |&\ X^\bullet\cap U(g, \delta)\neq\emptyset\}
=\\ 
&=\{X\in \B\ |\ \exists x^*\in K_{X^*}	\exists \lambda\ \exists \eps\forall y \in\Delta\ \ d_\Delta\big(\lambda \gamma(X,x^*,y),g(y)\big)<\delta-\eps\}\\
&= \{X\in \B\ |\ \exists n \exists\lambda\ \exists \eps\forall y\in G \ \ d_\Delta\big(\lambda \gamma(X,g_n(X),y),g(y)\big)<\delta-\eps\},
 \end{align*}\hfill

\noindent so  $X\mapsto X^\bullet$ is Borel.

Let us now define the desired map  $\langle\cdot,\cdot\rangle_{(\cdot)}:\A\to \R$, where $\A=\{(X,x,g)\in \SB\times C(\Delta)\times C(\Delta)\mid x\in X, g\in X^\bullet\}$. For each $(X,x,g)\in\A$, with $g=\lambda \gamma(X,x^*,\cdot)$, we let

$$\langle g,x\rangle_X=\lambda\alpha(X,x,x^*).$$\hfill

\noindent Let us show this map is well defined.\\

\textbf{Claim:} Fix $X\in \B$. Say $\lambda_1 \gamma(X,x^*(1),\cdot)=\lambda_2 \gamma(X,x^*(2),\cdot)$, where $\lambda_i\in\R$, and $x^*(i)\in K_{X^*}$, for $i\in\{1,2\}$. Then $\lambda_1x^*(1)=\lambda_2x^*(2)$. In particular, $\langle g, x\rangle_{X}$ does not depend on the representative $\lambda\gamma(X,x^*,\cdot)$ of $g$.\\

Indeed, by the definition of $\gamma$, we have

$$\lambda_1\langle j_X(H(L_{X^{**}})(\cdot)),x^*(1)\rangle=\lambda_2\langle j_X(H(L_{X^{**}})(\cdot)),x^*(2)\rangle,$$\hfill

\noindent which implies 

$$\langle j_X(H(L_{X^{**}})(y)),\lambda_1 x^*(1)-\lambda_2 x^*(2)\rangle=0,\ \ \ \forall y\in\Delta.$$\hfill

\noindent Therefore, as $H(L_{X^{**}}):\Delta\to L_{X^{**}}$ and $j_X:L_{X^{**}}\to B_{[K_{X^*}]^*}$ are surjective, we have that $\lambda_1 x^*(1)-\lambda_2 x^*(2)=0$, and the first part of the claim is done. By the definition of $\alpha$,  we have

$$\lambda \alpha(X,x,x^*)=\lambda\langle i_X(x^*),x\rangle.$$\hfill

\noindent Hence, as $i_X$ is linear, we conclude that $\langle g,x\rangle_X$ does not depend on the representative of $g$. So, $\langle \cdot,\cdot\rangle_{(\cdot)}$ is well defined. 

Let us now show that $\langle \cdot,\cdot\rangle_{(\cdot)}$ has the desired properties.\\

\textbf{Claim:} For each $X\in\B$, $\langle\cdot, \cdot\rangle_{X}$ is  bilinear.\\

Clearly, for a given $g\in X^\bullet$, the assignment $x\in X\mapsto \langle g, x\rangle_X\in \R$ is linear. Fix $x\in X$ and let $g_1=\lambda_1 \gamma(X,x^*(1),\cdot)\in X^\bullet$, $g_2=\lambda_2 \gamma(X,x^*(2),\cdot)\in X^\bullet$, and $g+h=\lambda_3 \gamma(X,x^*(3),\cdot)\in X^\bullet$. Similarly as in the previous claim, we have 

$$\lambda_1x^*(1)+\lambda_2x^*(2)=\lambda_3x^*(3).$$\hfill

\noindent Hence, as $i_X$ is linear, we have $\langle g+h,x\rangle_X=\langle g,x\rangle_X+\langle h,x\rangle_X$. Analogously, we have  $\langle \lambda g,x\rangle_X=\lambda \langle g,x\rangle_X$, for all $\lambda\in\R$, and we conclude that $g\in X^\bullet\mapsto \langle g,x\rangle_X\in\R$ is linear.

By \text{Corollary \ref{ooo}}, we have that  $g\in X^\bullet\mapsto \langle g,\cdot\rangle_X\in X^*$ is a surjective isometry. Indeed,  \text{Corollary \ref{ooo}} gives us that, if $g=\lambda \gamma(X,x^*,\cdot)$,

\begin{align*}
\|g\|_{C(\Delta)}&=\sup_{y\in\Delta}|\lambda \gamma(X,x^*,y)|=\|\lambda x^*\|_\infty=\|\lambda i_X(x^*)\|_{X^*}\\
&=\sup_{x\in B_{X}}|\lambda\langle i_X(x^*),x\rangle|=\sup_{x\in B_{X}}|\lambda\alpha(X,x,x^*)|\\
&=\sup_{x\in B_{X}}\langle g,x\rangle_X=\|\langle g,\cdot\rangle_X\|_{X^*}.
\end{align*}\hfill

\noindent Also, if $f\in X^*$, there exists $x^*\in K_{X^*}$, and $\lambda\in \R$ such that $f=\lambda i_X(x^*)$. Hence, letting $g=\lambda \gamma(X,x^*,\cdot)$, we have $\langle g,\cdot\rangle_X=f$, so $g\in X^\bullet\mapsto \langle g,\cdot\rangle_X\in X^*$ is surjective.

We also get for free that $\langle\cdot, \cdot\rangle_{X}$  is norm continuous, for each $X\in\B$. Let us show that $\langle \cdot,\cdot\rangle_{(\cdot)}:\A\to\R$ is Borel. For this, notice that the map 

\begin{align*}
(X,x,& x^*)\in \{(X,x,x^*)\in  \B\times C(\Delta)\times B_{\ell_\infty}\mid x\in X,x^*\in K_{X^*}\}\\ 
&\mapsto (X,x,\gamma(X,x^*,\cdot))\in  \{(X,x,g)\in \B\times C(\Delta)\times C(\Delta)\ |\ x\in X,g\in B_{X^\bullet}\}
\end{align*}\hfill

 \noindent is a Borel isomorphism, call the inverse of this map $J$.  As $\langle g,x\rangle_{X}$ does to depend on the representative of $g$, we have
 
 \begin{align*}
 \langle g,x\rangle_{X}=\Bigg\{\begin{array}{l}
 0,\text{\ \ if\ \ }g=0,\\
 \|g\|\alpha\Big(J\big(X,x,\frac{g}{\|g\|}\big)\Big), \text{ \ \ otherwise.}
  \end{array}
 \end{align*}\hfill
 
\noindent Therefore, the map $\langle\cdot, \cdot\rangle_{(\cdot)}$ is Borel, and we are done. 
\end{proof}

\section{A Borel Parametrized version of Zippin's Theorem} 

A famous theorem of Zippin says that, given a Banach space with  separable dual $X$, there exists a Banach space $Z$ with a shrinking basis such that $X$ embeds into $Z$ (see \cite{Z} for Zippin's original paper). Dodos and  Ferenczi had shown, using results from \cite{B}, that given an analytic subset $\A\subset \SD$, there exists a $Z\in \SD$ such that every $X\in\A$ embeds into $Z$ (see \cite{DF}). In other words, Dodos and Ferenczi proved a parametrized version of Zippin's theorem.

 We will now show that we can get something even stronger than a parametrized version of Zippin's theorem, we can get a \emph{Borel} parametrized version of it. Precisely, say $\B\subset \SD$ is Borel (notice, if $\A\subset \SD$ is analytic, then, as $\SD$ is coanalytic, Lusin's separation theorem gives us a Borel set $\B$ such that $\A\subset \B\subset \SD$), then one can find a space $Z\in\SD$ with a shrinking basis, and a   Borel function $\B\to \SB(Z)$ such that, for each $X\in\B$, the function assigns a  subspace of $Z$ isomorphic to  $X$ (see \text{Theorem }\ref{fim} for a precise statement).

\subsection{Embedding a Borel $\B\subset \SD$ into  spaces with shrinking bases.}

Dodos and Ferenczi had shown that for a given analytic set $\mathbb{A}\subset \SD$, there exists an analytic set $\A'\subset \SD$ such that (i) for every $X\in \mathbb{A}$, there exists an $Y\in\A'$ such that $X\hookrightarrow Y$, and (ii) $Y$ has a shrinking basis, for all $Y\in \A'$  (this was essentially done by using results of \cite{B}, the reader can find a complete proof in \cite{D1}, chapter $5$). In this subsection, we will show that we can actually find such $\A'$ by a Borel function.

Fix  a Borel $\mathbb{B}\subset \SD$. Bossard showed that for each $X\in \mathbb{B}$, there exists a sequence $(e^X_k)_k\in C(\Delta)^\N$ and a sequence of norms $(\|\cdot\|_{X,n})_n$ on $C(\Delta)$  such that, for each $X\in \mathbb{B}$, we have (a detailed construction of those objects can be found in \cite{D1}, chapter $5$):\\

\begin{enumerate}[(i)]
\item Each $\|\cdot\|_{X,n}$ is equivalent to the standard norm of $C(\Delta)$.
\item  Let $Z(X)=\{f\in C(\Delta)\ |\ \sum_n\|f\|^2_{X,n}<\infty\}$. Then $Z(X)$ is a Banach space under the norm $\|.\|_{Z(X)}=(\sum_n\|.\|^2_{X,n})^{1/2}$.
\item The inclusion $j_X:Z(X)\to C(\Delta)$ is continuous and $B_X\subset B_{Z(X)}$. So the inclusion  $\tau_X:X\subset C(\Delta)\to Z(X)$ is an embedding, and $\|\tau_X\|\leqslant 1$.
\item $(j_X^{-1}(e^X_k))_k$ is a shrinking bases for $Z(X)$. By abuse of notation, we will still denote this basis by $(e^X_k)_k$.
\end{enumerate}
\bigskip

Bossard proved the following lemmas (for detailed proofs see \cite{D1}, pages $85$ and $86$).

\begin{lemma}\label{1111}
The map $X\in \mathbb{B}\mapsto (e^X_k)_ k\in C(\Delta)^\N$ is Borel.
\end{lemma}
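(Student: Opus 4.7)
The plan is to unpack Bossard's explicit construction of $(e_k^X)_k$ step by step and verify that every choice made along the way can be carried out by a Borel function of $X$. The construction, as given in \cite{D1}, chapter $5$, builds the sequence $(e_k^X)_k$ (and along the way the auxiliary norms $\|\cdot\|_{X,n}$) from: a fixed dense sequence in $X$, a fixed norming dense sequence in $X^*$, and certain rational combinations and successive adjustments dictated by the interpolation scheme. So the strategy is to produce Borel versions of all of these ingredients and then observe that each of Bossard's operations on them is Borel.

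First, I would use the Kuratowski--Ryll-Nardzewski selectors $d_n(B_X)$, which are already known to be Borel in $X$ and rationally closed, as the Borel dense sequence in $X$. For the dual side, this is where the work of Section 3 pays off: Theorem \ref{f} applied to the Borel set $\mathbb{B} \subset \SD$ gives a Borel map $X \mapsto X^\bullet \in \SB$ together with the Borel functional evaluation $\langle\cdot,\cdot\rangle_X$. Composing with the Borel selectors $d_n(B_{X^\bullet})$ yields a Borel sequence $(f_n^X)_n$ that is dense in $B_{X^\bullet}$ and hence, after passing to the isometric identification $X^\bullet \equiv X^*$, a norming dense sequence in $B_{X^*}$. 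All the numerical quantities that appear in Bossard's definition of $\|\cdot\|_{X,n}$ (values of functionals on vectors, norms, suprema over rational linear combinations over countable dense sets) are therefore Borel functions of $X$.

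Once the sequence of equivalent norms $\|\cdot\|_{X,n}$ is shown to be Borel in the sense that $(X,f) \mapsto \|f\|_{X,n}$ is Borel on $\mathbb{B}\times C(\Delta)$, the maps $X \mapsto \|\cdot\|_{Z(X)}$ and $X \mapsto B_{Z(X)} \in \cF(C(\Delta))$ are also Borel. The vectors $e_k^X$ themselves are then defined in Bossard's construction by an explicit algorithm: roughly, one fixes an enumeration-scheme and produces each $e_k^X$ either as a preselected rational combination of the $d_n(B_X)$ or as the unique output of a block-construction controlled by the previously chosen $e_1^X,\dots,e_{k-1}^X$ and finitely many of the $f_n^X$. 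Since this algorithm is deterministic and uses only countable quantification over $\mathbb{Q}$ and $\N$, each $X \mapsto e_k^X$ is Borel from $\mathbb{B}$ into $C(\Delta)$, and hence the joint map $X \mapsto (e_k^X)_k \in C(\Delta)^\N$ is Borel as well.

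The main obstacle I anticipate is not any single step but the bookkeeping: Bossard's construction is recursive and at each stage one has to check that the condition under which a particular candidate vector or rational coefficient is chosen is itself a Borel condition on $X$. The cleanest way to handle this, which I would adopt, is to rewrite the entire recursion as the evaluation of a single Borel transition function $\Phi_k\colon \mathbb{B} \times C(\Delta)^{k-1} \to C(\Delta)$ at the previously defined $(e_1^X,\dots,e_{k-1}^X)$, and then appeal to closure of Borel maps under such countable recursive composition. The Borel measurability of each $\Phi_k$ reduces, via the explicit formulas of the construction, to the Borel measurability of $(X,f)\mapsto \|f\|_{X,n}$, of the functional evaluation from Theorem \ref{f}, and of the selectors $d_n$ and $f_n^X$; all of which have been established.
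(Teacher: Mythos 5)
First, note that the paper does not actually prove Lemma \ref{1111}: it is quoted as a result of Bossard, with the proof outsourced to \cite{D1}, pages $85$ and $86$, just as the construction of $(e^X_k)_k$ and $\|\cdot\|_{X,n}$ is itself outsourced to \cite{D1}, chapter $5$. So there is no in-paper argument to compare yours against, and any proof you supply has to start from the actual definitions in that reference.

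This is where your proposal has a genuine gap: it never states what $e^X_k$ is. The only concrete description you offer --- that each $e^X_k$ is ``either a preselected rational combination of the $d_n(B_X)$ or the unique output of a block-construction controlled by the previously chosen $e^X_1,\dots,e^X_{k-1}$'' --- is a conjecture about the contents of \cite{D1}, flagged as such by the word ``roughly'', and the concluding inference ``the algorithm is deterministic and uses only countable quantification, hence Borel'' is a heuristic rather than an argument: Borelness of $X\mapsto e^X_k$ must be read off from the actual formula defining $e^X_k$ in terms of $W_X$ and the interpolation scheme, since explicit constructions can fail to be Borel (for instance if a ``first $n$ such that\dots'' step quantifies over an uncountable set, or if a closure or infimum is taken in a non-Borel way). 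What you do identify correctly are the two genuine Borel inputs the parametrized construction needs: a Borel dense sequence in $X$ (the selectors $d_n(B_X)$) and a Borel norm-dense selection in a coding of $B_{X^*}$. For the latter, however, invoking all of Theorem \ref{f} is a detour (though not circular, since Section $3$ does not depend on this lemma); the natural tool, consistent with how the paper treats $\SD$, is Lemma \ref{dodoss} applied to the Borel set $\mathbb{D}=\{(X,x^*)\ |\ x^*\in K_{X^*}\}$, which directly produces Borel maps $g_n:\B\to B_{\ell_\infty}$ with $(g_n(X))_n$ norm dense in $K_{X^*}$. To close the gap you would need to write down the definition of $W_X$ from \cite{D1}, page $83$, verify that $X\mapsto W_X\in\cF(C(\Delta))$ is Borel, deduce Lemma \ref{2222} from the gauge formula for $\|\cdot\|_{X,n}$, and only then treat $(e^X_k)_k$ from its actual definition, which your argument never uses.
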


\begin{lemma}\label{2222}
For every $n\in\N$, the map $(X,f)\in \mathbb{B}\times C(\Delta)\mapsto  \|f\|_{X,n}\in \R$ is Borel.
\end{lemma}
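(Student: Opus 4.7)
The plan is to follow the Davis-Figiel-Johnson-Pełczyński (DFJP) interpolation scheme that underlies Bossard's construction. Recall that for each $X\in\mathbb{B}$ one associates a weakly compact, convex, symmetric set $W_X\subset C(\Delta)$ (built out of $B_X$ and the duality machinery), and $\|\cdot\|_{X,n}$ is defined as the Minkowski functional of $U_{X,n}=2^n W_X+2^{-n}B_{C(\Delta)}$, i.e.
\begin{equation*}
\|f\|_{X,n}=\inf\{\lambda>0\mid f\in\lambda U_{X,n}\}
=\inf\{\lambda>0\mid \exists w\in W_X,\ \|f-2^n\lambda w\|_\infty\leqslant 2^{-n}\lambda\}.
\end{equation*}

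The first step is to check that the set-valued assignment $X\mapsto W_X$ is Borel as a map $\mathbb{B}\to\cF(C(\Delta))$ with the Effros-Borel structure. Here the construction of $W_X$ uses $B_X$ (whose Kuratowski--Ryll-Nardzewski selectors $d_n(B_X)$ are Borel in $X$) together with the duality data supplied by Theorem \ref{f}, so each defining condition of $W_X$ (membership, boundedness, symmetry, weak compactness) translates into a Borel predicate on $(X,f)$. By the Kuratowski--Ryll-Nardzewski selection theorem applied to this Borel set-valued map, we obtain a sequence of Borel functions $h_k:\mathbb{B}\to C(\Delta)$ with $(h_k(X))_k$ dense in $W_X$ for every $X\in\mathbb{B}$.

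The second step rewrites the defining infimum using this countable dense data. For any $t\in\R$,
\begin{equation*}
\|f\|_{X,n}\leqslant t\quad\Longleftrightarrow\quad \forall\eps\in\Q^+\ \exists \lambda\in\Q^+,\ \exists k\in\N\ \big(\lambda\leqslant t+\eps\ \wedge\ \|f-2^n\lambda h_k(X)\|_\infty\leqslant 2^{-n}\lambda+\eps\big).
\end{equation*}
The inner predicate $\|f-2^n\lambda h_k(X)\|_\infty\leqslant 2^{-n}\lambda+\eps$ is Borel in $(X,f)$, since $X\mapsto h_k(X)$ is Borel and $(g,f)\mapsto\|g-f\|_\infty$ is continuous on $C(\Delta)\times C(\Delta)$. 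Hence the level set $\{(X,f)\mid \|f\|_{X,n}\leqslant t\}$ is a countable intersection of countable unions of Borel sets, and $(X,f)\mapsto\|f\|_{X,n}$ is Borel.

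The main obstacle is the first step: showing that $W_X$ really does depend Borel-measurably on $X$. Bossard's original $W_X$ is constructed by passing through $X^{**}$ (or equivalently the weak$^*$-closure of $B_X$ inside $X^{**}$) before pulling back to $C(\Delta)$, which is exactly the sort of operation that is not manifestly Borel at the level of the Effros structure. This is where Theorem \ref{f} does the heavy lifting: it provides explicit Borel codings for $X^*$ (and, by Lemma \ref{dodosdual}, for $X^{**}$) together with Borel functional evaluations, so each stage of the construction of $W_X$ can be verified to be Borel in $X$. Once $W_X$ is in hand Borel-measurably, the Minkowski-functional formula above yields the Borel measurability of $(X,f)\mapsto\|f\|_{X,n}$ essentially for free.
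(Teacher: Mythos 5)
The paper does not actually prove this lemma: it is quoted as a result of Bossard, with the proof deferred to \cite{D1}, pages 85--86, so there is no in-paper argument to compare against. Judged on its own terms, the second half of your proposal is sound and is essentially the standard (and, I believe, the cited) argument: once one has a Borel map $X\mapsto W_X\in\cF(C(\Delta))$, Kuratowski--Ryll-Nardzewski selectors $(h_k(X))_k$ dense in $W_X$ reduce the gauge to a countable quantifier computation. Your biconditional for $\{(X,f)\mid \|f\|_{X,n}\leqslant t\}$ is correct, although both directions deserve a line of justification: the forward one by approximating a witness $w\in W_X$ by some $h_k(X)$, and the backward one by rescaling $w'=(\lambda/\mu)w\in W_X$ for $\mu>t$ (using that $W_X$ is convex, symmetric and bounded) together with the closedness of $2^nW_X+2^{-n}B_{C(\Delta)}$. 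You are also right that the paper's ``$2^{-n}B_X$'' should be read as $2^{-n}B_{C(\Delta)}$, consistently with $Z(X)$ being the DFJP interpolation space of the pair $(C(\Delta),W_X)$.

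The genuine gap is your first step. The whole content of the lemma is the Borelness of $X\mapsto W_X$, and you do not prove it: you never write down what $W_X$ is, so the claim that ``each defining condition of $W_X$ translates into a Borel predicate'' has nothing to attach to, and the assertion that Theorem \ref{f} ``does the heavy lifting'' is both unsubstantiated and misdirected. In the construction the paper is citing, $W_X$ is built directly from $B_X$ (via its dense selectors $d_n(B_X)$) by explicit countable operations, so that $\{X\mid W_X\cap U\neq\emptyset\}$ can be tested on a countable, Borel-in-$X$ dense subset of $W_X$; no Borel coding of $X^{*}$ or $X^{**}$ is required, and indeed Bossard's result predates the duality machinery of Section 3. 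Routing the proof through Theorem \ref{f} would not be circular here (Lemma \ref{2222} is only used in Section 4), but as written it merely replaces the lemma by the equally unproven statement that $X\mapsto W_X$ is Borel, which is precisely the point at issue. To complete the proof you must fix the actual definition of $W_X$ from \cite{D1} and verify its Effros-Borel measurability from that definition.
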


Therefore, by $\|.\|_{Z(X)}$-normalizing $(e^X_k)_k$, we can assume: \\

\begin{enumerate}[(i)']
\setcounter{enumi}{3}
\item  $(e^X_k)_k$ is a \emph{normalized} shrinking bases for $Z(X)$.
\end{enumerate}
\bigskip

We need one more property of the objects described above. The norms $\|.\|_{X,n}$ are obtained  by letting

$$\|x\|_{X,n}=\inf \{\lambda>0 \ |\ \frac{x}{\lambda}\in 2^nW_X+2^{-n}B_X\},$$\hfill

\noindent where $W_X\subset C(\Delta)$ is a closed, bounded, and symmetric subset of $C(\Delta)$ defined in terms of $X$  (the map $X\in \B\mapsto W_X\in \cF(C(\Delta))$ is actually Borel, see \cite{D1}, page 86). Hence, $Z(X)$ is the \emph{$2$-interpolation space} of the pair $(C(\Delta),W_X)$ (see  Davis-Figiel-Johnson-Pelczynski \cite{DFJP}, for definition and basic facts about this interpolation space). It is easy to see, looking at the definition of interpolation spaces, that the inclusion $j_X:Z(X)\to C(\Delta)$ is continuous and it is bounded by $9K$, where

$$K=\max\{1,\sup\{|w|\ |\ w\in W_X\}\}.$$\hfill

\noindent By looking at the definition of $W_X$ (see \cite{D1}, page 83), one easily sees that $W_X\subset B_{C(\Delta)}$, so $K=1$. Therefore,  the norms of the inclusions $j_X$ are uniformly bounded by $9$, for all $X\in \B$. 

The conclusion of the discussion above is that we can assume:\\

\begin{enumerate}[(i)']
\setcounter{enumi}{2}
\item The inclusions $j_X:Z(X)\to C(\Delta)$ are continuous and their norms are uniformly bounded by $9$. As $B_X\subset B_{Z(X)}$, the inclusion  $\tau_X:X\subset C(\Delta)\to Z(X)$ is an embedding, and $\|\tau_X\|\leqslant 1$. Moreover, $\tau_X:X\subset C(\Delta)\to Z(X)$ is a $9$-embedding, for all $X\in \mathbb{B}$. 
\end{enumerate}
\bigskip

The reader should be aware that, by abuse of notation, if $x\in X$, we write $x$ every time we refer to $\tau_X(x)\in Z(X)$. As $\tau_X$ is an inclusion, we hope this will not cause any confusion.

Given $a=(a_1,...,a_k)\in \Q^{<\Q}$, let $a \times (e^X_j)$ stand for $\sum_{i=1}^ka_i e^X_i\in Z(X)$. As $\Q^{<\Q}$ is countable, we can fix an enumeration for its non zero elements, say $(\alpha_n)_n$. Given $X\in \mathbb{B}$, let

$$K_{Z(X)^*}=\Big\{z^*\in B_{\ell_\infty}\ |\ \exists f\in B_{Z(X)^*}\forall n\in\N\ \ z^*_n=\frac{f(\alpha_n\times (e^X_j))}{\| \alpha_n \times(e^X_j)\|_{Z(X)}}\Big\}\subset B_{\ell_\infty}.$$\hfill

Thus, $K_{Z(X)^*}$ is a coding for the unit ball $B_{Z(X)^*}$, and it is easy to check that $K_{Z(X)^*}\equiv B_{Z(X)^*}$. Indeed, this follows from the same arguments as when we proved that $L_{X^{**}}\equiv B_{[K_{X^*}]^*}$, right before \text{Lemma \ref{dodosdual}}. 

Define $\mathbb{D}\subset \mathbb{B}\times B_{\ell_\infty}$ by 

$$(X,z^*)\in \mathbb{D}\Leftrightarrow z^* \in K_{Z(X)^*}.$$\hfill 

\noindent Then $\mathbb{D}$ is Borel. Indeed, we only need to notice that

\begin{align*}
(X,z^*)\in \mathbb{D} &\Leftrightarrow \forall \alpha_n,\alpha_m,\alpha_\ell\in \Q^{<\Q} \forall p,q\in \Q\\
&p\alpha_n\times (e^X_j)+q\alpha_m\times (e^X_j)=\alpha_\ell\times (e^X_j)\\
&\rightarrow pz^*_n\|\alpha_n\times (e^X_j)\|_{Z(X)}+ qz^*_m\|\alpha_m\times (e^X_j)\|_{Z(X)}=z^*_\ell\|\alpha_\ell\times (e^X_j)\|_{Z(X)},
\end{align*}\hfill

\noindent so, by \text{Lemma \ref{1111}}, and \text{Lemma \ref{2222}}, $\mathbb{D}$ is Borel. \text{Theorem \ref{ooiiou}}
 gives us the following.
 
\begin{lemma}\label{memata}
Assume $\B\subset \SD$ is Borel. The map 

$$X\in \mathbb{B}\mapsto K_{Z(X)^*}\in \cK(B_{\ell_\infty})$$\hfill

\noindent is Borel. Moreover, for all $X\in \mathbb{B}$, there exists an onto isometry $i_X:K_{Z(X)^*}\to B_{Z(X)^*}$ such that, if $f=i_X(z^*)$, then $z^*_n= f(\alpha_n\times (e^X_j))/\| \alpha_n \times(e^X_j)\|_{Z(X)}$.
\end{lemma}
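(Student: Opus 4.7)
The plan is to prove this lemma in close parallel to Lemma \ref{dodos}, using Theorem \ref{ooiiou} applied to the Borel set $\mathbb{D}\subset\mathbb{B}\times B_{\ell_\infty}$ that has already been introduced immediately above the statement. The Borel measurability of $\mathbb{D}$ was established by writing $(X,z^*)\in\mathbb{D}$ as a countable conjunction of conditions involving $X\mapsto \|\alpha_n\times (e^X_j)\|_{Z(X)}$ (Borel by Lemma \ref{2222}, together with Lemma \ref{1111}). So the only input still required to invoke Theorem \ref{ooiiou} is the compactness of each section $\mathbb{D}_X=K_{Z(X)^*}$.

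First I would verify that $K_{Z(X)^*}$ is a closed (hence compact) subset of $B_{\ell_\infty}$. This is immediate from the defining conditions: each linear relation $p\alpha_n\times(e^X_j)+q\alpha_m\times(e^X_j)=\alpha_\ell\times(e^X_j)$ imposes a closed linear relation on the corresponding coordinates $z^*_n,z^*_m,z^*_\ell$, and the intersection of these closed constraints inside the compact space $B_{\ell_\infty}$ is closed. With both hypotheses of Theorem \ref{ooiiou} in place, the map $X\in\mathbb{B}\mapsto K_{Z(X)^*}\in\cK(B_{\ell_\infty})$ is Borel.

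For the isometry, I would define $i_X:K_{Z(X)^*}\to B_{Z(X)^*}$ as the inverse of the natural evaluation map
\[
\Phi_X:f\in B_{Z(X)^*}\mapsto\Big(\frac{f(\alpha_n\times(e^X_j))}{\|\alpha_n\times(e^X_j)\|_{Z(X)}}\Big)_n\in K_{Z(X)^*}.
\]
Surjectivity of $\Phi_X$ onto $K_{Z(X)^*}$ holds by definition of the target. Injectivity and the isometric property both follow from the fact that $(e^X_k)_k$ is a Schauder basis of $Z(X)$: the set $\{\alpha_n\times(e^X_j)\mid n\in\N\}$ is a norm-dense $\Q$-linear subspace of $Z(X)$, so any $f\in Z(X)^*$ is completely determined by its values on this set, and moreover
\[
\|f\|_{Z(X)^*}=\sup_n\frac{|f(\alpha_n\times(e^X_j))|}{\|\alpha_n\times(e^X_j)\|_{Z(X)}},
\]
which is exactly $\|\Phi_X(f)\|_\infty$. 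This computation is the same one carried out just before Lemma \ref{dodosdual} for the map into $L_{X^{**}}$, and it transports verbatim to the present setting.

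The only delicate point is making sure that the enumeration $(\alpha_n)_n$ of $\Q^{<\Q}\setminus\{0\}$ generates a set dense in $Z(X)$ for every $X\in\mathbb{B}$; this is precisely where property (i)$'$ (that $(e^X_k)_k$ is a normalized shrinking basis of $Z(X)$) is used, and once that is invoked the rest of the argument is routine. No additional technical obstacle appears, since the Borel measurability of the underlying data $X\mapsto(e^X_k)_k$ and $(X,f)\mapsto\|f\|_{X,n}$ has already been supplied by Lemmas \ref{1111} and \ref{2222}.
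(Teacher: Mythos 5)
Your proposal matches the paper's argument: the paper likewise establishes Borelness of $\mathbb{D}$ via the countable family of rational linear relations (using Lemmas \ref{1111} and \ref{2222}), invokes Theorem \ref{ooiiou} on the compact sections $\mathbb{D}_X=K_{Z(X)^*}$, and obtains the isometry $i_X$ by the same computation carried out before Lemma \ref{dodosdual}. The proof is correct and takes essentially the same approach.
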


The following lemmas will play the same role \text{Lemma \ref{dual}}, \text{Lemma \ref{dual2}}, and \text{Lemma \ref{ooo}} played in the previous section.

\begin{lemma}\label{osana}
Say $\B\subset \SD$ is Borel, and let $i_X$ be as in \text{Lemma \ref{memata}}. Let $\mathbb{A}=\{(X,n,z^*)\in \mathbb{B}\times  \N\times B_{\ell_\infty} \ |\ z^*\in K_{Z(X)^*}\}$. Define $\alpha:\mathbb{A}\to \R$ as

$$\alpha(X,n,z^*)= \langle i_X(z^*),\alpha_n\times (e^X_j)\rangle ,$$\hfill

\noindent  for each $(X,n,z^*)\in\A$. Then, $\mathbb{A}$ is a Borel set, and $\alpha$ a Borel map.
\end{lemma}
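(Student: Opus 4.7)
First I would verify that $\mathbb{A}$ is Borel. The set $\{(X,z^*) \in \B \times B_{\ell_\infty} \mid z^* \in K_{Z(X)^*}\} = \mathbb{D}$ was already shown to be Borel (using Lemmas \ref{1111} and \ref{2222}), so $\mathbb{A} = \mathbb{D} \times \N$ (suitably reordered) is Borel.

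Next, the crucial observation is that, unlike in Lemma \ref{dual}, the element at which we are evaluating the functional is exactly one of the distinguished elements used to define the coding $K_{Z(X)^*}$. More precisely, by the very definition of $i_X$ in Lemma \ref{memata}, for $z^* \in K_{Z(X)^*}$ and $f = i_X(z^*)$ we have
$$z^*_n = \frac{f(\alpha_n \times (e^X_j))}{\|\alpha_n \times (e^X_j)\|_{Z(X)}}.$$
Therefore
$$\alpha(X,n,z^*) = \langle i_X(z^*), \alpha_n \times (e^X_j)\rangle = z^*_n \cdot \|\alpha_n \times (e^X_j)\|_{Z(X)}.$$
So no limiting argument over a dense sequence is needed; $\alpha$ is literally a product of two maps.

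It remains to see that both factors are Borel. The map $(X,n,z^*) \mapsto z^*_n$ is continuous (projection onto a coordinate). For the norm factor, by Lemma \ref{1111} the map $X \mapsto (e^X_k)_k$ is Borel, so $(X,n) \mapsto \alpha_n \times (e^X_j) = \sum_i (\alpha_n)_i e^X_i \in C(\Delta)$ is Borel (a finite $\Q$-linear combination determined by $n$). Then by Lemma \ref{2222}, each $(X,f) \mapsto \|f\|_{X,m}$ is Borel, and by property (ii) of the Bossard construction,
$$\|f\|_{Z(X)} = \Bigl(\sum_m \|f\|_{X,m}^2\Bigr)^{1/2}$$
is a pointwise limit of Borel functions in $(X,f)$, hence Borel. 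Composing gives that $(X,n) \mapsto \|\alpha_n \times (e^X_j)\|_{Z(X)}$ is Borel, and so $\alpha$ is Borel as a product of two Borel real-valued maps. There is no serious obstacle here; the computation is essentially forced by how $i_X$ was defined.
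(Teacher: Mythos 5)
Your proposal is correct and matches the paper's intent: the paper records exactly the same key identity $\alpha(X,n,z^*)=z^*_n\,\|\alpha_n\times (e^X_j)\|_{Z(X)}$ immediately after the lemma and defers the rest to the argument of Lemma \ref{dual}, while you simply spell out the Borelness of the two factors via Lemmas \ref{1111} and \ref{2222}. Your observation that no limiting argument over a dense sequence is needed (since the evaluation point is one of the distinguished vectors $\alpha_n\times(e^X_j)$) is a small but accurate simplification of the paper's ``analogous to Lemma \ref{dual}'' remark.
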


Notice that, for all $(X,n,z^*)\in \mathbb{A}$, we have $\alpha(X,n,z^*)=z^*_n\|\alpha_n\times (e^X_j)\|_{Z(X)}$. The proof of this lemma is analogous to the proof of \text{Lemma \ref{dual}}. 

The inclusion $\tau_X:X\to Z(X)$ is an embedding, therefore, for each $x\in X$,  there exists a sequence $(\alpha_{n_k}\times (e^X_j))_k$ converging to $x$ in $Z(X)$.  With this observation in mind we have the following.

\begin{lemma}\label{osana2}
Let $\B\subset \SD$ be Borel, and let $i_X$ be as in \text{Lemma \ref{memata}}. Set $\mathbb{A}'=\{(X,x, z^*)\in \mathbb{B}\times  C(\Delta)\times B_{\ell_\infty} \ |\ x\in X,z^*\in K_{Z(X)^*}\}$, and define $\alpha':\mathbb{A}'\to \R$ by 

$$\alpha'(X,x,z^*)=\langle i_X(z^*),x\rangle.$$\hfill

\noindent for each $(X,x,z^*)\in \A' $. Then, $\mathbb{A}'$ is a Borel set, and $\alpha'$ a Borel map.
\end{lemma}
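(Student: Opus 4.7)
The plan is to mimic the proof of Lemma \ref{dual}, with the Kuratowski--Ryll-Nardzewski selectors $(d_n(X))_n$ replaced by the sequence $(\alpha_n \times (e^X_j))_n$ of rational finite linear combinations of the basis $(e^X_k)_k$. Because $(e^X_k)_k$ is a Schauder basis of $Z(X)$, this latter sequence is $\|\cdot\|_{Z(X)}$-dense in $Z(X)$, and in particular it is dense in $X$ viewed as a subspace of $Z(X)$ via $\tau_X$.

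First, Borel-ness of $\A'$ is immediate: by Lemma \ref{memata} the map $X \mapsto K_{Z(X)^*}$ is Borel, and $\{(X,x) \in \B \times C(\Delta) \mid x \in X\}$ is Borel by standard facts about the Effros--Borel structure.

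For $\alpha'$, the key observation is that $i_X(z^*) \in Z(X)^*$ is continuous in the $Z(X)$-norm, so for every $x \in X$ and every sequence $(\alpha_{n_k} \times (e^X_j))_k$ converging to $x$ in the $Z(X)$-norm one has
$$\alpha'(X,x,z^*) \;=\; \lim_k \langle i_X(z^*), \alpha_{n_k} \times (e^X_j)\rangle \;=\; \lim_k \alpha(X,n_k,z^*),$$
where $\alpha$ is the Borel map furnished by Lemma \ref{osana}. To turn this observation into a Borel formula I need the function $(X,f) \in \B \times C(\Delta) \mapsto \|f\|_{Z(X)} \in [0,\infty]$ to be Borel; this is immediate from the identity $\|f\|_{Z(X)}^2 = \sum_m \|f\|_{X,m}^2$ together with Lemma \ref{2222}, since a countable sum of non-negative Borel functions is Borel and the square root is continuous. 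Given $a<b$ in $\R$, the set $\{(X,x,z^*) \in \A' \mid \alpha'(X,x,z^*) \in (a,b)\}$ then coincides with
$$\{(X,x,z^*) \in \A' \mid \exists \delta\,\forall \eps\,\exists n\colon \|\alpha_n \times (e^X_j) - x\|_{Z(X)} < \eps \text{ and } \alpha(X,n,z^*) \in (a+\delta,b-\delta)\},$$
which is Borel.

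The only subtlety compared to Lemma \ref{dual} is that the approximation of $x$ must take place in the $Z(X)$-norm rather than in the ambient sup-norm on $C(\Delta)$, since $i_X(z^*)$ is only a priori continuous in the $Z(X)$-norm; apart from verifying the Borel-ness of $\|\cdot\|_{Z(X)}$ as a function of $(X,f)$, I do not expect any serious obstacle.
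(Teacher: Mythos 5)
Your proposal is correct and follows essentially the same route as the paper: the paper likewise reduces Lemma \ref{osana2} to the argument of Lemma \ref{dual}, replacing the selectors $(d_n(X))_n$ by the $\|\cdot\|_{Z(X)}$-dense sequence $(\alpha_n\times(e^X_j))_n$ and using $\alpha'(X,x,z^*)=\lim_k z^*_{n_k}\|\alpha_{n_k}\times(e^X_j)\|_{Z(X)}$ with the approximation taken in the $Z(X)$-norm. Your explicit verification that $(X,f)\mapsto\|f\|_{Z(X)}$ is Borel via Lemma \ref{2222} is a detail the paper leaves implicit, and it is exactly the right point to check.
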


Notice that, when we write $``\langle i_X(z^*),x\rangle"$, we are thinking of $x$ as an element of $Z(X)$ in order for this to make sense. For each $(X,x,z^*)\in \mathbb{A}'$, we have $\alpha'(X,x,z^*)= \lim_k z^*_{n_k}\|\alpha_{n_k}\times (e^X_j)\|_{Z(X)}$, where $\alpha_{n_k}\times (e^X_j)\to x$ in $Z(X)$. Hence, the proof of this lemma is also analogous to the proof of \text{Lemma \ref{dual}}.

We now prove the main theorem of this subsection.

\begin{theorem}\label{parte}
 Let $\mathbb{B}\subset \SD$ be Borel, and let $\mathbb{E}=\{(X,x)\in \mathbb{B}\times C(\Delta)\ |\ x\in X\}$. There are Borel maps 

$$\sigma:\mathbb{B}\to C(\Delta)^\N\text{ \ \ and \ \ }\varphi:\mathbb{E}\to C(\Delta)$$\hfill

\noindent such that, by setting $\varphi_X=\varphi(X,\cdot)$, we have that, for each $X\in \B$,

\begin{enumerate}[(i)]
\item $\sigma(X)$ is a normalized shrinking basic sequence, and
\item $\text{Im}(\varphi_X)\subset  \overline{\text{span}}\{\sigma(X)\}$ and $\varphi_X: X\to \overline{\text{span}}\{\sigma(X)\}$ is a $9$-embedding.
\end{enumerate}
\end{theorem}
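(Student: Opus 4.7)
The plan is to assemble the result from two Borel-measurable ingredients already at hand. First, the Bossard interpolation construction summarized in the properties above provides, uniformly Borel in $X$, a Banach space $Z(X)$ with a normalized shrinking basis $(e_k^X)_k$ and a $9$-embedding $\tau_X\colon X\to Z(X)$. The snag is that $Z(X)$ sits in $C(\Delta)$ only as a set, not isometrically, so the sequence $(e_k^X)$ viewed in $C(\Delta)$ need not even be basic. The second ingredient, which I will supply, is a Borel family of \emph{isometric} linear embeddings $\Phi_X\colon Z(X)\hookrightarrow C(\Delta)$; it is obtained by the same device used in the proof of Theorem \ref{f}.

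Concretely, combining Lemma \ref{memata} with Lemma \ref{rosendal} applied to $M=B_{\ell_\infty}$, I first choose in a Borel way a continuous surjection $h_X\colon \Delta\to K_{Z(X)^*}$. I then define $\Phi_X\colon Z(X)\to C(\Delta)$ by
$$\Phi_X(w)(y)=\langle i_X(h_X(y)),w\rangle.$$
Continuity of $\Phi_X(w)$ on $\Delta$ holds because $h_X$ is continuous into the product topology on $K_{Z(X)^*}$, $i_X$ is a homeomorphism onto $B_{Z(X)^*}$ with its weak$^*$ topology, and evaluation at $w$ is weak$^*$-continuous. Surjectivity of $h_X$ onto $K_{Z(X)^*}$ yields $\|\Phi_X(w)\|_\infty=\sup_{f\in B_{Z(X)^*}}|f(w)|=\|w\|_{Z(X)}$, so $\Phi_X$ is a linear isometry.

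With $\Phi_X$ in hand, I would set $\sigma(X)=(\Phi_X(e_k^X))_k$ and $\varphi(X,x)=\Phi_X(\tau_X(x))$. Because $\Phi_X$ is a linear isometry and $(e_k^X)$ is a normalized shrinking basis of $Z(X)$, $\sigma(X)$ is automatically a normalized shrinking basic sequence in $C(\Delta)$ whose closed linear span equals $\Phi_X(Z(X))$; and $\varphi_X=\Phi_X\circ\tau_X$ is a $9$-embedding of $X$ into $\overline{\text{span}}\{\sigma(X)\}$, being the composition of the $9$-embedding $\tau_X$ with the isometry $\Phi_X$.

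Borel measurability of $\sigma$ and $\varphi$ both reduce to Borelness of $(X,w,y)\mapsto \langle i_X(h_X(y)),w\rangle$, where $w$ is either a coordinate basis vector $e_k^X$ (for $\sigma$) or a generic $x\in X$ (for $\varphi$). This will follow from Lemma \ref{memata}, Lemma \ref{rosendal}, and Lemmas \ref{osana}--\ref{osana2}, combined with the ``countable dense subset of $\Delta$'' argument used at the end of the proof of Theorem \ref{f} to pass from pointwise Borelness to Borelness as a $C(\Delta)$-valued map. The hardest part should be the bookkeeping needed to verify that $\Phi_X$ is a genuine isometry and that all pairings are consistently interpreted via the interpolation norm $\|\cdot\|_{Z(X)}$ rather than the ambient $C(\Delta)$ norm; once this is secured, the rest of the verification parallels the proof of Theorem \ref{f}.
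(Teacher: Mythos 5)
Your proposal is correct and follows essentially the same route as the paper: your $h_X$ is exactly $H(K_{Z(X)^*})$ from Lemma \ref{rosendal}, your $\Phi_X(w)(y)=\langle i_X(h_X(y)),w\rangle$ is precisely what the maps $\alpha$ and $\alpha'$ of Lemmas \ref{osana} and \ref{osana2} compute, and your $\sigma(X)=(\Phi_X(e_k^X))_k$, $\varphi(X,x)=\Phi_X(\tau_X(x))$ coincide with the paper's definitions. The only difference is presentational: you package the construction as a single Borel family of isometries $\Phi_X\colon Z(X)\to C(\Delta)$, whereas the paper defines the isometry first on the dense set $\{\alpha_n\times(e_j^X)\}$ and extends by continuity.
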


\begin{proof}
Let $H:\cK(B_{\ell_\infty})\to C(\Delta, B_{\ell_\infty})$ be given by \text{Lemma \ref{rosendal}}, and let $\alpha$ and $\alpha'$ be given by \text{Lemma \ref{osana}}, and \text{Lemma \ref{osana2}}, respectively. Fix  $(n_k)\in\N^\N$ such that, for each $k\in\N$, $\alpha_{n_k}\times (e^X_j)=e^X_k$ (notice that this does not depend on $X$).  For each $X\in \mathbb{B}$, let

$$\sigma(X)=\big(\alpha(X,n_k,H(K_{Z(X)^*})(\cdot))\big)_k.$$\hfill

By \text{Lemma \ref{osana}}, and as $H$ and $X\mapsto K_{Z(X)^*}$ are Borel, it is clear that $\sigma$ is Borel. Also,  $\sigma(X)$ is equivalent to the $(e^X_k)_k$, so $\sigma(X)$ is normalized and shrinking. Indeed,  we can define a map on $Z(X)$ by letting

$$\alpha_n\times(e^X_j)\in Z(X)\mapsto \alpha(X,n,H(K_{Z(X)^*})(\cdot))\in \overline{\text{span}}\{\sigma(X)\},$$\hfill

\noindent and extending it to the entire $Z(X)$, making it continuous. This map is clearly surjective. Also, as $H(K_{Z(X)^*}):\Delta\to K_{Z(X)^*}$, and $i_X:K_{Z(X)^*}\to B_{Z(X)^*}$ are surjective, we have that 

$$ \langle i_X(H(K_{Z(X)^*})(y)),x_1-x_2\rangle =0, \ \ \forall y\in \Delta, $$\hfill

\noindent can only happen if $x_1=x_2$. So, by the definition of $\alpha$, this map is a bijection. Again by the definition of $\alpha$, we have

$$\|\alpha_n\times(e^X_j)\|_{Z(X)}=\sup_{y\in\Delta}|\alpha(X,n,H(K_{Z(X)^*})(y))|,$$\hfill

\noindent hence, this map defines a surjective isometry between $Z(X)$ and $\overline{\text{span}}\{\sigma(X)\}$, and the basis $(e^X_k)$ is sent to 

$$\big(\alpha(X,n_k,H(K_{Z(X)^*})(\cdot))\big)_k=\sigma(X).$$\hfill

\noindent Therefore, $\sigma(X)$ is indeed $1$-equivalent to the $(e^X_k)_k$.

Let $\varphi(X,x)=\alpha'(X,x,H(K_{Z(X)^*})(\cdot)),$ for all $(X,x)\in\mathbb{E}$. \text{Lemma \ref{osana2}} gives us that $\varphi$ is Borel. Notice that $\varphi_X$ is the composition of  $\tau_X:X\subset C(\Delta)\to Z(X)$  with the  isometry 

$$Z(X)\to\overline{\text{span}}\{\sigma(X)\}$$\hfill

\noindent described above. Therefore, as the inclusion $\tau_X:X\subset C(\Delta)\to Z(X)$ is a $9$-embedding, we have  that $\varphi_X$ is a $9$-embedding, for all $X\in \B$. 
\end{proof}

\subsection{Embedding a Borel set of bases into  a single space with a shrinking bases.}

In \cite{AD},  Argyros and Dodos showed that if  $\mathbb{A}\subset \SD$ is analytic and $X$ has a shrinking Schauder basis, for all $X\in \mathbb{A}$, then $\mathbb{A}$ can be embedded into a single $Z\in\SD$, i.e., there exists $Z\in \SD$ such that $X\hookrightarrow Z$, for all $X\in \mathbb{A}$. In this section, we will follow Argyros and Dodos' method in order to embed $\mathbb{A}$ into a single $Z\in \SD$ in a \emph{Borel} manner.

Let $bs=\{(f_n)\in S_{C(\Delta)}^\N\ |\ (f_n) \text{ is a basic sequence}\}$. It is easy to see that the set of basic sequences $bs$ is Borel in $C(\Delta)^\N$. 

Let us recall Schechtman's  construction of Pelczynski's universal space (see \cite{Sc}). Let $(d_n)$ be a dense sequence in the unit sphere of $C(\Delta)$. For each $s=(n_1,...,n_k)\in [\N]^{<\N}$, let $m_s=n_k$. For each $s\in [\N]^{<\N}$, let $g_s=d_{m_s}$. The universal Pelczynski space $U$ is defined as the completion of $c_{00}([\N]^{<\N})$ under the norm

$$\|x\|_U=\sup\{\|\sum_{s\in I} x_s g_s\|\ |\ I \text{ is a segment of }[\N]^{<\N}\},$$\hfill

\noindent for all $x=(x_s)\in c_{00}([\N]^{ <\N})$. By taking an isometric copy of $U$ inside of $C(\Delta)$, we can assume $U\subset C(\Delta)$. Fix a bijection $\varphi: [\N]^{<\N}\to \N$ such that if $s_1\prec s_2$ then $\varphi(s_1)< \varphi(s_2)$. It is easy to see that $(g_{\varphi^{-1}(n)})_n$ is a bases for $U$. 

This construction of $U$ gives us that if $(f_k)\in S_{C(\Delta)}^\N$ is a  basic sequence and $(d_{n_k})$ is a subsequence of $(d_n)$ close enough  to $(f_n)$, then $(f_k)_k\sim (g_{(n_1, ...,n_k)})_k$. More precisely, if $(f_k)$ has basic constant $K$, $\theta\in(0,1)$, and $\|f_k-d_{n_k}\|< 2^{-k}\theta\|f_k\|/2K$, for all $k\in\N$, the principle of small perturbation gives us that (see \cite{AK}, \text{Theorem 1.3.9})

\begin{enumerate}[(i)]
\item $(f_k)_k\sim (g_{(n_1, ...,n_k)})_k$, and
\item the isomorphism  between $\overline{\text{span}}\{f_k\}\subset C(\Delta)$ and $\overline{\text{span}}\{ g_{(n_1, ...,n_k)}\}\subset U$, given by $f_k\mapsto g_{(n_1, ...,n_k)}$, is an $\frac{1+\theta}{1-\theta}$-isomorphism.
\end{enumerate}

Fix $\theta>0$. Let us define a function $b_\theta: bs\to [\N]^\N$. For this, given any basic sequence $(f_n)\in S_{C(\Delta)}^\N$ with basic constant $K$, we produce a subsequence of $(d_n)$ as follows. Say $n_1<...<n_k$ had been chosen. Let $n_{k+1}$ be the first natural number such that $n_{k+1}>n_k$ and 

$$\|f_{k+1} -d_{n_{k+1}}\|<\frac{2^{-(k+1)}}{2K}\theta\|f_{k+1}\|.$$ \hfill

The map $k:bs\mapsto \R$ that assigns to each basic sequence  its basic constant is Borel, indeed, given $b\in \R$,

\begin{align*}
(f_n)\in \{(g_n)\in bs\ |\ k((g_n)_n)<b\}\ \Leftrightarrow\ &\exists r\in (-\infty, b)\cap \Q\ \forall a_1,...,a_m\in \Q\\
&\forall k\leqslant m \ \ \| \sum_{i=1}^ka_if_i\|\leqslant r\| \sum_{i=1}^ma_if_i\|.
\end{align*}\hfill

\noindent Therefore, it should be clear that, for any fixed $\theta>0$, the function $b_\theta:bs\to [\N]^\N$ described in the previous paragraph is Borel. Also, we should keep in mind that, if $b_\theta((f_k)_k)=(n_k)_k$, the isomorphism between $\overline{\text{span}}\{f_k\}$ and $\overline{\text{span}}\{g_{(n_1,...,n_k)}\}\subset U$ is given by $f_k\mapsto g_{(n_1,...,n_k)}$. 

Let $\cS\subset [\N]^\N$ be the standard coding for shrinking basic subsequences of the bases of $U$, i.e., we define $\cS$ by

$$\cS=\{(n_k)_k\in[\N]^\N\ |\  (g_{(n_1,...,n_k)})_k\in U^\N\text{ is shrinking}\}.$$\hfill

\noindent It is known that $\cS$ is a coanalytic set (see for example \cite{D1}, Section $2.5.3$). 

We now follow Argyros and Dodos' approach. Fix $\theta\in(0,1)$. Let $\mathbb{B}\subset \SD$ be Borel, and $\sigma$ be as in \text{Theorem \ref{parte}}. Let $\xi_\theta=b_\theta\circ\sigma: \mathbb{B}\to [\N]^\N$, so $\xi_\theta(\mathbb{B})\subset [\N]^\N$ is analytic, and $\xi_\theta(\mathbb{B})\subset \cS$. As $\cS$ is coanalytic, Lusin's separation theorem gives us  a Borel set $A$ such that $\xi_\theta(\mathbb{B})\subset A\subset \cS$ (see \cite{Ke}, \text{Theorem 14.7}). Therefore, there exists a pruned tree $T$ on $\N\times\N$, such that the projection $p([T])=\{\sigma\in [\N]^\N\ |\ \exists \tau\ (\sigma,\tau)\in [T]\}$ equals $A$ (see \cite{D1}, \text{Theorem 1.6}). For each $t=(s,v)=((s_1,...,s_k),(v_1,...,v_k))\in (\N\times\N)^{<\N}$, let $x_t=g_{(s_1,...,s_k)}$. 

Let $U'=\overline{\text{span}}\{x_t\ |\ t\in T\}$, and $T'=T\setminus \{\emptyset\}$. So $(U', \N\times\N, T', (x_t)_{t\in T'})$ is a Schauder tree basis (see the last paragraphs of Section $2$ for definitions). 

Let $Z$ be the $\ell_2$-Baire sum of the Schauder tree basis $(U', \N\times\N, T', (x_t)_{t\in T'})$. Then, by \text{Theorem \ref{yy}}, $(x_t)_{t\in T}$ is  a shrinking bases for $Z$. In particular, $Z\in \SD$. 

We have the following trivial lemma.

\begin{lemma}\label{lemma25}
Let $\gamma: \N^\N\to Z^\N$ be defined by $\gamma((n_k)_k)=(g_{(n_1,...,n_k)})_k$. Then $\gamma$ is continuous, hence Borel. 
\end{lemma}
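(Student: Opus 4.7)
The plan is straightforward: show that $\gamma$ is continuous, from which Borel measurability is automatic since $\N^{\N}$ and $Z^{\N}$ are Polish spaces. Continuity will be verified coordinate by coordinate, exploiting the fact that $\N^{\N}$ carries the product topology with $\N$ discrete, so that functions depending only on a finite initial segment of the input are locally constant.

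First I would fix $k \in \N$ and examine the $k$-th coordinate map $\pi_k \circ \gamma : \N^{\N} \to Z$, which sends $(n_j)_{j \in \N}$ to $g_{(n_1,\ldots,n_k)}$. This value depends only on the finite tuple $(n_1,\ldots,n_k)$. For any point $(n_j) \in \N^{\N}$, the set
$$V_{(n_1,\ldots,n_k)} = \{(m_j)_{j \in \N} \in \N^{\N} \mid m_j = n_j \text{ for all } j \leqslant k\}$$
is a basic clopen neighborhood of $(n_j)$ in the product topology, and $\pi_k \circ \gamma$ is constant on $V_{(n_1,\ldots,n_k)}$ with value $g_{(n_1,\ldots,n_k)}$. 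Hence each coordinate map $\pi_k \circ \gamma$ is locally constant, and in particular continuous with respect to whatever topology one puts on $Z$, including the norm topology.

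By the universal property of the product topology on $Z^{\N}$, the continuity of every coordinate projection $\pi_k \circ \gamma$ implies the continuity of $\gamma$ itself. Since any continuous map between Polish spaces is Borel measurable, $\gamma$ is Borel, which is the conclusion we wanted.

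I do not expect any real obstacle in carrying this out. The entire argument reduces to the observation that $\gamma$ factors through the finite projections $\N^{\N} \to \N^k$, and that any function on $\N^k$ (with $\N$ discrete) is continuous. The only thing that must be kept in mind, but which is built into the definition of the Schauder tree basis $(U', \N \times \N, T', (x_t)_{t \in T'})$ and the convention $x_t = g_{(s_1,\ldots,s_k)}$ for $t = (s,v)$, is that $g_{(n_1,\ldots,n_k)}$ is unambiguously identified with an element of $Z$ so that the map $\gamma$ is well defined as a function into $Z^{\N}$.
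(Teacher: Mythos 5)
Your proof is correct and is exactly the standard argument the paper has in mind when it calls the lemma ``trivial'' and omits the proof: each coordinate of $\gamma$ depends only on the first $k$ entries of the input, hence is locally constant on basic clopen sets of $\N^\N$, and continuity of $\gamma$ follows from continuity of its coordinates. Your closing remark about the identification of $g_{(n_1,\ldots,n_k)}$ with an element of $Z$ is the right thing to flag, but it concerns well-definedness of $\gamma$ rather than its continuity, and the paper glosses over it in the same way.
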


Before proving  \text{Theorem \ref{fim}} let us prove one more lemma. For each $X\in\SB$, we let $bs(X)=\{(f_n)\in S_{X}^\N\ |\ (f_n)\text{ is basic}\}$. So $bs(X)$ is Borel.

\begin{lemma}\label{lll}
Say $Y,Z\in \SB$. Let 

$$\mathbb{A}=\{((f_n),(g_n),x)\in bs(Y)\times bs(Z)\times Y\ |\  (f_n)\sim (g_n)\}.$$ \hfill

\noindent For each basic sequences $(f_n)\in bs(Y)$ and $(g_n)\in bs(Z)$, denote by $I_{f,g}$ the linear map such that $f_n\mapsto g_n$. Then, $\mathbb{A}$ is Borel and the map

$$((f_n),(g_n),x)\in \mathbb{A}\mapsto I_{f,g}(x)\in Z$$\hfill

\noindent is Borel.
\end{lemma}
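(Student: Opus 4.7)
The plan is to reduce both assertions to characterizations in terms of finite rational combinations of the basic sequences.

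For the Borelness of $\A$, I would invoke the standard reformulation: $(f_n)\sim(g_n)$ iff there exists $C\in\Q^+$ such that for all $m\in\N$ and all $(a_1,\ldots,a_m)\in\Q^m$,
$$\tfrac{1}{C}\bigl\|\sum_{i=1}^m a_if_i\bigr\|_Y \;\leqslant\; \bigl\|\sum_{i=1}^m a_ig_i\bigr\|_Z \;\leqslant\; C\bigl\|\sum_{i=1}^m a_if_i\bigr\|_Y.$$
Density of $\Q$ in $\R$ together with continuity of the norms lets us restrict the inner quantifier to rational scalars. Each of the above inequalities is Borel in $((f_n),(g_n))$ because finite linear combinations are continuous functions of the basic sequence and the norms on $Y,Z$ are continuous, so $\A$ is Borel. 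For $I_{f,g}(x)$ to be unambiguously defined we also need $x\in\overline{\text{span}}\{f_n\}$, which I view as implicit in $\A$; this extra condition is itself Borel, being equivalent to $\forall k\,\exists m\,\exists a_1,\ldots,a_m\in\Q\ \|\sum a_if_i-x\|_Y<2^{-k}$.

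For the Borelness of the evaluation map, I would build Borel approximations. Fix once and for all an enumeration $(\eta_\ell)_{\ell\in\N}$ of $\bigcup_m(\N\times\Q^m)$, writing $\eta_\ell=(m_\ell;a^\ell_1,\ldots,a^\ell_{m_\ell})$. For each $k\in\N$ define $\phi_k:\A\to Z$ as follows: given $((f_n),(g_n),x)\in\A$, let $\ell(k,f,g,x)$ be the least $\ell$ with $\|\sum_i a^\ell_if_i-x\|_Y<2^{-k}$, and set
$$\phi_k\bigl((f_n),(g_n),x\bigr) \;:=\; \sum_i a^{\ell(k,f,g,x)}_i\,g_i.$$
Such an $\ell$ always exists since $x\in\overline{\text{span}}\{f_n\}$. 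Each $\phi_k$ is Borel: it decomposes as a countable disjoint union over $\ell\in\N$ of its restrictions to the Borel set $\{\ell(k,\cdot)=\ell\}$, and on each piece it is a continuous function of $(g_n)$.

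Finally, using the equivalence $(f_n)\sim(g_n)$ with constant $C=C((f_n),(g_n))$, the triangle inequality gives $\|\phi_k-\phi_{k'}\|_Z\leqslant C(2^{-k}+2^{-k'})$, so $(\phi_k((f_n),(g_n),x))_k$ is Cauchy in $Z$ and converges to $I_{f,g}(x)$ (this is just continuity of $I_{f,g}$ applied to the approximations $\sum a^{\ell(k)}_if_i\to x$). Thus $I_{f,g}(x)$ is a pointwise limit of Borel maps, hence Borel. The main obstacle is choosing a single approximation scheme that is simultaneously Borel in all three variables, which is handled by fixing one enumeration in advance; the non-uniformity of the equivalence constant $C$ across the parameter space is harmless since only pointwise convergence is required.
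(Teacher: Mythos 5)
Your proof is correct, and it reaches the Borelness of $((f_n),(g_n),x)\mapsto I_{f,g}(x)$ by a genuinely different mechanism than the paper. The paper first checks that $((f_n),(g_n))\mapsto \|I_{f,g}\|$ is Borel on the set of equivalent pairs, and then writes out directly, for an open ball $U\subset Z$, the condition $I_{f,g}(x)\in U$ as a Borel formula: there is a rational combination $\sum a_if_i$ within $\delta/\|I_{f,g}\|$ of $x$ whose image $\sum a_ig_i$ lands $\delta$-deep inside $U$, tested against the dense selectors $d_m(Z)$. You instead fix an enumeration of rational tuples, define for each $k$ a least-index Borel selector $\phi_k$ that picks a $2^{-k}$-approximation of $x$ from $\text{span}\{f_n\}$ and pushes it forward through the $g_n$, and conclude by closure of Borel maps under pointwise limits. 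Your route has the advantage that the equivalence constant enters only in verifying convergence, not in the definition of the approximants, so you never need the separate (and in the paper slightly garbled) computation of the operator norm; the paper's route is more of a one-shot verification on subbasic open sets. Both rest on the same underlying approximation idea. You are also right to flag that $I_{f,g}(x)$ only makes sense for $x\in\overline{\text{span}}\{f_n\}$: the paper's $\mathbb{A}$ formally allows arbitrary $x\in Y$, and its proof silently assumes the restriction you make explicit (and correctly observe is itself a Borel condition); in the application to Theorem \ref{fim} the inputs do lie in the closed span, so nothing is lost.
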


\begin{proof}
Clearly, $\mathbb{A}$ is Borel. In order to see that this map is Borel, first notice that if $C=\{((f_n),(g_n)) \in bs(Y)\times bs(Z)\ |\ (f_n)\sim (g_n)\}$ then 

$$((f_n),(g_n))\in C\mapsto \|I_{f,g}\|\in \R$$\hfill

\noindent is Borel. Indeed, say $b\in \R$, then

\begin{align*}
((f_n),(g_n))\in\{((f_n),(g_n)) \in C\ |\ \|I_{f,g}\|<b\}\ \Leftrightarrow\ &\exists r\in (-\infty, b)\cap \Q\ \forall a_1,...,a_m\in \Q\\
&\ \ \| \sum_{i=1}^ma_if_i\|\leqslant r\| \sum_{i=1}^ma_ig_i\|.
\end{align*}\hfill

Let $U\subset C(\Delta)$ be an open ball open. Then

\begin{align*}
I_{f,g}(x)\in U\neq \emptyset \ \Leftrightarrow\ &\exists \delta\exists a_1,...,a_n\ \|x-\sum a_if_i\|_X<\frac{\delta}{\|I_{f,g}\|}\\
&\wedge\Big(\forall m \|d_m(Z)-\sum a_ig_i\|_Z<\delta\Big)\rightarrow d_m(Z)\in U,
\end{align*}\hfill

\noindent and we are done.
\end{proof}

\begin{proof}\textbf{(of theorem \ref{fim})} Let $\mathbb{B}\subset \SD$ be Borel. Fix $\theta\in(0,1)$. Let $Z$ be the $\ell_2$-Baire sum described in the discussion preceeding Lemma \ref{lemma25}. Let $\sigma$ and $\varphi$ be as in \text{Theorem \ref{parte}}, and $b_\theta$ and $\gamma$ be as above. Let $\chi_\theta= \gamma\circ b_\theta \circ\sigma$, and define, for each $(X,x)\in \E$,

$$\psi(X,x)= I_{\sigma(X),\chi_\theta(X)}(\varphi(X,x)).$$\hfill

Notice that, for all $X\in\B$, the sequence $\sigma(X)$ is equivalent to $\chi_\theta(X)$, so $I_{\sigma(X),\chi_\theta(X)}$ is well defined. By \text{Lemma \ref{lll}} and the fact that $\sigma$, $\varphi$ and $\chi_\theta$ are Borel, we have that $\psi$ is Borel. By \text{Theorem \ref{parte}}, we have that $\varphi_X:X\to \overline{\text{span}}\{\sigma(X)\}$ is a $9$-embedding, for all $X\in \mathbb{B}$. By the construction of $\chi_\theta$, we have that 

$$I_{\sigma(X),\chi_\theta(X)}:\overline{\text{span}}\{\sigma(X)\}\to Z$$\hfill

\noindent  is an $\frac{1+\theta}{1-\theta}$-embedding, for all $X\in\B$. Hence, by choosing $\theta$ small enough, $\psi_X$ is a $10$-embedding, for all $X\in \B$.

For each $X\in \mathbb{B}$, define $\Psi(X)=\overline{\text{span}}\{\psi_X(d_n(X))\ |\ n\in\N\}$. It is clear that $\Psi$ is Borel, and that it has the desired properties.
\end{proof}

\textbf{Acknowledgements:} The author would like to thank his adviser C. Rosendal for all the help and attention he gave to this paper.

\end{document}